\documentclass[12pt]{article}
\usepackage{amsfonts,amsthm,latexsym,amsmath,amssymb, enumerate, amsmath, amsfonts, amscd, amsthm}
\usepackage{caption}

\newtheorem{theo+}           {Theorem}
\newtheorem{prop+}           {Proposition}
\newtheorem{coro+}           {Corollary}
\newtheorem{lemm+}           {Lemma}

\newtheorem{ques+}{Question}
\theoremstyle{definition}

\newtheorem{not+}            {Notation}

\newtheorem{Ex}              {Example}[section]

\newtheorem*{ack}            {Acknowledgements}

\newtheorem{rema+}           {Remark}

\captionsetup{tablename=Chart}
\parindent=0pt
\textheight =23cm
\textwidth =15cm
\topmargin =1cm
\evensidemargin=0.5cm
\oddsidemargin=0.5cm
\voffset =-0.5cm
\hoffset =-0.25cm
\lineskip =0.0cm
\parskip =0pt

\newenvironment{theorem}{\begin{theo+}}{\end{theo+}}
\newenvironment{proposition}{\begin{prop+}}{\end{prop+}}
\newenvironment{corollary}{\begin{coro+}}{\end{coro+}}
\newenvironment{lemma}{\begin{lemm+}}{\end{lemm+}}
\newenvironment{remark}{\begin{rema+}}{\end{rema+}}

\newenvironment{question}{\begin{ques+}}{\end{ques+}}

\newcommand{\bq}{\begin{eqnarray}}
\newcommand{\eq}{\end{eqnarray}}
\newcommand{\beq}{\begin{eqnarray*}}
\newcommand{\eeq}{\end{eqnarray*}}

\date{}

\linespread{1.1}
\begin{document}

\thispagestyle{empty}
\title{Weakly-morphic modules}
\author{Philly Ivan  Kimuli\footnote{Corresponding author}~   and David Ssevviiri}
\maketitle
\begin{center}
Department of Mathematics, Makerere University,\\
P.O. Box 7062, Kampala Uganda\\
E-mail addresses:  kpikimuli@gmail.com; david.ssevviiri@mak.ac.ug
\end{center}
\begin{abstract}
\noindent Let $R$ be a commutative ring, $M$ an $R$-module and $\varphi_a$ be  the endomorphism  of $M$ given by  right multiplication by
$a\in R$.    We say that $M$ is {\it weakly-morphic} if $M/\varphi_a(M)\cong \ker(\varphi_a)$ as $R$-modules for every   $a$.  We study these  modules  and use them to  characterise the rings $R/\text{Ann}_R(M)$, where $\text{Ann}_R(M)$ is the right annihilator of $M$.   A kernel-direct or image-direct module $M$ is weakly-morphic if and only if  each element of $R/\text{Ann}_R(M)$ is regular as an endomorphism element  of $M$.    If $M$ is a weakly-morphic module over an integral domain $R$, then $M$ is torsion-free if and only if it is divisible if and only if $R/\text{Ann}_R(M)$ is a field.  A finitely generated  $\Bbb Z$-module is  weakly-morphic if and only if it is finite; and it is morphic if and only if it is weakly-morphic  and each of its primary  components is of the form  $(\Bbb Z_{p^k})^n$ for  some non-negative integers $n$ and  $k$.
\end{abstract}

\textbf{Keywords}: morphic module; weakly-morphic module; regular ring

\textbf{Mathematics Subject Classification (2020)}: Primary 13C13,  Secondary  16E50,  20K01, 13G05, 13E05

\maketitle

\maketitle
\section{Introduction}
Throughout this paper, all rings are  commutative with a non-zero identity and all modules are  unital right $R$-modules.
Let $R$ be a commutative ring with non-zero identity and  $M$ be a  non-zero unital right $R$-module.   $S:=\text{End}_R(M)$  denotes the ring of endomorphisms of $M$;  and $S_M\cong R/\text{Ann}_R(M)$ denotes the ring of endomorphisms of $M$   given by  right multiplication by $a\in R$ (where  $\text{Ann}_R(M)=\{r\in R:mr=0~\text{for every}~m\in M\}$ the right annihilator of $M$).   For $\varphi\in S, \ker(\varphi)$  denotes the kernel  of $\varphi$.
Following \cite{nicholson2005morphic},  $\varphi\in S$  is  {\it morphic} if $M/\varphi(M)\cong \ker(\varphi)$ as right $R$-modules.  The module $M$ is morphic if every $\varphi\in S$ is morphic.   If $M=R_R$, then this notion reduces to that of rings.
An element $a$ of a ring $R$ is called {\it morphic} if $R/aR\cong\text{Ann}_R(a)$, where $\text{Ann}_R(a)\text{:}=\{r\in R:ar=0\}$, as  $R$-modules. The ring $R$ is called  {\it  morphic} if every element of $R$ is  morphic.  The notion of morphic modules has its roots in the study of unit-regular rings.   G. Ehrlich introduced the class of unit-regular rings in \cite{ehrlich1968unit} and  \cite{ehrlich1976units} in which she proved that a  ring is unit-regular if and only if it is regular and morphic.  From 2003, the notion has been extensively studied and extended, for instance,  to  modules, groups,  rings and near-rings, see   \cite{bamunoba2020morphic, Calugareanu2010abelian, cualuguareanu2018abelian, Harmanci, li2010morphic,  nicholson2005survey, nicholson2004rings, nicholson2004principal, nicholson2005morphic} among others for details.\\

\noindent To any $R$-module $M$, we can associate the two rings:  $S$ and $S_M$.   Properties of the module $M$ can be characterised by either $S$ or $S_M$ and vice-versa.    For instance,   a module $M$ is: (i)  indecomposable if and only if $S$ does not contain any nontrivial idempotent elements \cite[Proposition 5.10]{anderson1992rings}; (ii)  morphic and either image-direct or kernel-direct if and only if  $S$ is unit-regular \cite[Theorem  35]{nicholson2005morphic}.    The main objective of this paper, besides studying properties of weakly-morphic modules, is to explore the relationship between weakly-morphic modules and the rings $R$ and $S_M$.   In particular,  we show that if  $M$ is a kernel-direct or image-direct module, then $M$ is weakly-morphic if and only if each element of $S_M$ is regular as an element of $S$ (see Theorem~\ref{datadata}); if $M$ is a weakly-morphic module over an integral domain, then $M$ is torsion-free if and only if it is divisible if and only if $S_M$ is a field (Theorem~\ref{fafa}); over a   principal ideal domain $D$, a finitely generated torsion-free $D$-module is weakly-morphic if and only if  $D$ is  a morphic ring (Proposition~\ref{capsa}).\\

\noindent It is further shown that a finitely generated Abelian  group is weakly-morphic as a $\Bbb Z$-module if and only if it is finite (Theorems~\ref{das}).  Hence, by \cite[Theorem 26]{nicholson2005morphic}, morphic finitely generated Abelian  groups are the  weakly-morphic $\Bbb Z$-modules whose $(p)$-primary  components are of the form  $(\Bbb Z_{p^k})^n$ for  some non-negative integers $n$  and $k$ (see Theorems~\ref{dsa}).
Finally, by Theorem~\ref{ftftf}, a Noetherian integral domain $D$ is Artinian if and only if every torsion-free $D$-module is weakly-morphic if and only if every divisible $D$-module is weakly-morphic.\\

\noindent By $\mathbb{Z}, \Bbb Z_n, \mathbb{Q}$ and $\mathbb{R}$ we denote the ring of integers, integers modulo $n$, rational and real numbers respectively.        The notation $N \subseteq M$(resp., $N\subseteq^{\bigoplus} M$) means that $N$ is a  submodule (resp., a direct summand) of $M$.
$\text{Ann}_{M}(a)\text{:}=\{m\in M:ma=0~\text{ for}~ a\in R\}$  and $\text{Ann}_{R}(m)\text{:}=\{r\in R:mr=0~\text{ for}~0\neq m\in M\}$.
 For each $a\in R$, $\varphi_a$  denotes the endomorphism  of $M$
given by  right multiplication by $a$.   The  image $\varphi_a(M)$ and kernel $\ker(\varphi_a)$ of $\varphi_a$, where necessary, are denoted by $Ma$ and  $\text{Ann}_{M}(a)$ respectively.   By $U(R)$ we mean  the group of units of $R$ and {\bf Mod-}$R$  denotes the category of all right $R$-modules.

\section{Basic properties of weakly-morphic modules}\label{sec2}
Let $R$ be a ring, $M$ an $R$-module and $\varphi_a$ be  the endomorphism  of $M$ given by  right multiplication by
$a\in R$.      We say that $M$ is $a$-{\it morphic}  if  $M/\varphi_a(M)\cong \ker(\varphi_a)$ as $R$-modules.   It
 is  a \emph{weakly-morphic module} if it is $a$-morphic for every $a\in R$, or equivalently if  $M/Ma\cong \text{Ann}_{M}(a)$  as $R$-modules  for every $a\in R$.
It is easy to see that for $M = R{}_{R}$, the notion of   weakly-morphic modules coincides with that of morphic rings; as the only endomorphisms on $R$ are those of the type $\varphi_a(r)= ar$.    Every morphic module is weakly-morphic but there exists weakly-morphic modules which are not morphic.

\begin{Ex}~\label{gal}
The $\mathbb{Z}$-modules $\mathbb{Z}_2$ and $\mathbb{Z}_4$ are morphic  but the direct sum  $M = \mathbb{Z}_2\bigoplus\mathbb{Z}_4$  is not morphic by \cite[Example 12]{nicholson2005morphic}.   However, here we claim that $M$ is a weakly-morphic $\mathbb{Z}$-module.  Let $r\in\mathbb{Z}$.  If $a=4r$, then  $Ma=0$ and $\text{Ann}_M(a)=M$.  If $a=4r+1$ or $4r+3$, then $Ma=M$ and $\text{Ann}_M(a)=0$.   In  both cases it is easy to see that $M/Ma\cong \text{Ann}_M(a)$.  The remaining case is $a=4r+2$ for which $Ma=0\bigoplus 2\mathbb{Z}_4$.   Again here $M/Ma=(\mathbb{Z}_2\bigoplus\mathbb{Z}_4)/(0\bigoplus 2\mathbb{Z}_4)\cong \mathbb{Z}_2\bigoplus \mathbb{Z}_2\cong\text{Ann}_{M}(a)$.  Thus $M$ is weakly-morphic.
\end{Ex}
\begin{lemma}\label{x}
Let  $M$ be an $R$-module and  $a\in R$.  The following statements are equivalent:
\begin{enumerate}[(1)]
\item[\emph{(1)}] $M$ is $a$-morphic.
\item[\emph{(2)}]  There exists $\psi\in S$ such that $Ma = \ker(\psi)$ and $\text{Ann}_{M}(a) = \psi(M).$
\item[\emph{(3)}] There exists $\psi\in S$ such that the sequence below is exact $$ {\cdots}\stackrel{\psi}{\longrightarrow}M\stackrel{\varphi_a}{\longrightarrow}M\stackrel{\psi}{\longrightarrow}M\stackrel{\varphi_a}{\longrightarrow}M\stackrel{\psi}{\longrightarrow}{\cdots}.$$
\end{enumerate}
\end{lemma}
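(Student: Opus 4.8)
The plan is to establish the cycle of equivalences by proving the two pairs (1)$\Leftrightarrow$(2) and (2)$\Leftrightarrow$(3) separately, since in each case one statement is essentially a reformulation of the other. The whole lemma is the module analogue of the standard characterisation of morphic elements in a ring, so I expect the argument to be short and structural rather than computational.

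For (1)$\Rightarrow$(2), I would manufacture the endomorphism $\psi$ directly from an isomorphism $\theta\colon M/Ma\to\text{Ann}_M(a)$ witnessing $a$-morphicity. Writing $\pi\colon M\to M/Ma$ for the quotient map and $\iota\colon\text{Ann}_M(a)\hookrightarrow M$ for the inclusion, I set $\psi=\iota\circ\theta\circ\pi$, which lies in $S$ because it is a composite of $R$-linear maps. Since $\theta$ is surjective and $\iota$ injective, $\psi(M)=\iota(\theta(M/Ma))=\text{Ann}_M(a)$; and since $\theta$ and $\iota$ are injective, $\psi(m)=0$ forces $\pi(m)=0$, so $\ker(\psi)=\ker(\pi)=Ma$. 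This yields (2). The converse (2)$\Rightarrow$(1) is immediate from the first isomorphism theorem: given $\psi\in S$ with $Ma=\ker(\psi)$ and $\text{Ann}_M(a)=\psi(M)$, one has $M/Ma=M/\ker(\psi)\cong\psi(M)=\text{Ann}_M(a)$, which is exactly $a$-morphicity.

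Finally, (2)$\Leftrightarrow$(3) is a matter of unwinding the definition of exactness for the doubly-infinite periodic complex. At each vertex into which $\psi$ maps and out of which $\varphi_a$ maps, exactness says $\text{im}(\psi)=\ker(\varphi_a)$, i.e. $\psi(M)=\text{Ann}_M(a)$; at each vertex into which $\varphi_a$ maps and out of which $\psi$ maps, exactness says $\text{im}(\varphi_a)=\ker(\psi)$, i.e. $Ma=\ker(\psi)$. Because every vertex of the alternating sequence is of one of these two types, global exactness is equivalent to the conjunction of these two equalities, which is precisely (2). I do not anticipate a genuine obstacle: the only points needing care are checking that the constructed $\psi$ is indeed an $R$-endomorphism (which holds since it is a composite of $R$-linear maps) and correctly bookkeeping which image–kernel equality is forced at which vertex of the periodic complex.
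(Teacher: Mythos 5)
Your proof is correct and takes essentially the same approach as the paper's: construct $\psi$ as the composite of the quotient map with the isomorphism $M/Ma\cong\text{Ann}_M(a)$ (and the inclusion into $M$), use the first isomorphism theorem for the converse, and identify exactness of the alternating periodic sequence with the two image--kernel equalities of (2). The only cosmetic differences are that you organise the argument as two biconditionals (1)$\Leftrightarrow$(2) and (2)$\Leftrightarrow$(3) instead of the paper's cycle (1)$\Rightarrow$(2)$\Rightarrow$(3)$\Rightarrow$(1), and that you write the inclusion $\iota$ explicitly where the paper leaves it implicit.
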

\begin{proof}~
\begin{enumerate}
  \item [(1)$\Rightarrow$(2)] Given (1),  let $M \stackrel{\pi}{\longrightarrow} M/Ma\stackrel{\sigma}{\longrightarrow}\text{Ann}_{M}(a)$, where $\pi$ is the coset map and $\sigma$ is an isomorphism.  By setting $\psi:=\sigma\pi$, we get the required endomorphism.
  \item [(2)$\Rightarrow$(3)] By (2), there exists $\psi\in S$ such that $Ma=\ker(\psi)$ and $\psi(M)=\ker(\varphi_a)$.  It follows that the sequence ${\cdots}\stackrel{\psi}{\longrightarrow}M\stackrel{\varphi_a}{\longrightarrow}M\stackrel{\psi}{\longrightarrow}M\stackrel{\varphi_a}{\longrightarrow}M
      \stackrel{\psi}{\longrightarrow}{\cdots}$ is exact.
  \item [(3)$\Rightarrow$(1)]  By (3), $\text{Im}(\varphi_a)=\ker(\psi)$ and $\psi(M)=\ker(\varphi_a)$.  It follows that $M/\text{Im}(\varphi_a)=M/\ker(\psi)\cong \psi(M)=\ker(\varphi_a)$.  This proves (1).
\end{enumerate}
\end{proof}

\noindent Following the fact that $R_R$ is $a$-morphic if and only if $a\in R$ is a morphic element in $R$, we have Corollary~\ref{y} which compares with  \cite[Lemma 1]{nicholson2004rings}:

\begin{corollary}\label{y}
The following statements are equivalent for an element $a$ in $R$:
\begin{enumerate}[(1)]
\item[\emph{(1)}] $a$ is morphic.
\item[\emph{(2)}] There exists $b\in R$ such that $aR = \text{Ann}_{R}(b)$ and $\text{Ann}_{R}(a) = bR.$
\item[\emph{(3)}] There exists $b\in R$ such that the sequence below is exact $$ {\cdots}\stackrel{ b\cdot}{\longrightarrow}R_R\stackrel{a\cdot}{\longrightarrow}R_R\stackrel{b\cdot }{\longrightarrow}R_R\stackrel{a\cdot }{\longrightarrow}R_R \stackrel{b\cdot}{\longrightarrow}{\cdots}.$$
\end{enumerate}
\end{corollary}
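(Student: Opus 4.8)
The plan is to derive Corollary~\ref{y} as the special case $M = R_R$ of Lemma~\ref{x}. The crucial preliminary observation, already noted in the text, is that $R_R$ is $a$-morphic precisely when $a$ is a morphic element of $R$, so that statement (1) of the corollary coincides with statement (1) of the lemma for this particular module. It then suffices to translate statements (2) and (3) of the lemma into the ring-theoretic language of the corollary.

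First I would pin down the endomorphism ring $S = \text{End}_R(R_R)$. Any $R$-linear $\psi \colon R_R \to R_R$ satisfies $\psi(x) = \psi(1\cdot x) = \psi(1)x$, so setting $b := \psi(1)$ we see that $\psi$ is multiplication by $b$; conversely every such multiplication map is an endomorphism. Since $R$ is commutative this is simultaneously left and right multiplication, giving the identification $S \cong R$ sending $\psi \mapsto b$. Under this identification the data appearing in Lemma~\ref{x} become purely ring-theoretic: the image $\varphi_a(M) = Ma$ is the ideal $aR$, the kernel $\ker(\varphi_a) = \text{Ann}_M(a)$ is $\text{Ann}_R(a)$, while $\ker(\psi) = \text{Ann}_R(b)$ and $\psi(M) = bR$.

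With these translations in hand, statement (2) of Lemma~\ref{x}, namely the existence of $\psi \in S$ with $Ma = \ker(\psi)$ and $\text{Ann}_M(a) = \psi(M)$, reads exactly as the existence of $b \in R$ with $aR = \text{Ann}_R(b)$ and $\text{Ann}_R(a) = bR$, which is statement (2) of the corollary. Likewise the doubly-infinite exact sequence of statement (3) of the lemma, with $\psi$ replaced by multiplication by $b$ and $\varphi_a$ by multiplication by $a$, is precisely the sequence displayed in statement (3) of the corollary. Thus the three equivalences transport verbatim from Lemma~\ref{x}.

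I expect no genuine obstacle here: the content is entirely contained in Lemma~\ref{x}, and the only care required is the bookkeeping of the identification $S \cong R$ together with the consistent rewriting of images, kernels and annihilators. The one point worth stating explicitly is that commutativity of $R$ is what lets us present each endomorphism as an honest two-sided multiplication, so that $Ma = aR$ and the maps in the resolution match the labels $a\cdot$ and $b\cdot$ without any side ambiguity.
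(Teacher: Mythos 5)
Your proposal is correct and follows exactly the route the paper intends: the paper gives no separate proof of Corollary~\ref{y}, deriving it from Lemma~\ref{x} with $M = R_R$ via the observation that every endomorphism of $R_R$ is multiplication by some $b = \psi(1)$, which is precisely the identification you spell out. Your write-up simply makes explicit the bookkeeping (images become principal ideals, kernels become annihilators) that the paper leaves to the reader.
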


\begin{corollary}~\label{hth}
Let $R$ be a commutative ring and $M$ be a nontrivial $R$-module.
\begin{enumerate}[(1)]
\item[\emph{(1)}] If $M$ is Artinian and $\text{Ann}_{M}(a) = 0, a\in R$,  then $M$ is $a$-morphic.
\item[\emph{(2)}] If ${}M$ is Noetherian and $Ma = M, a\in R$,  then $M$ is $a$-morphic.
\end{enumerate}
\end{corollary}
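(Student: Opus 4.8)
The plan is to reduce both parts to the two classical facts about self-maps of chain-conditioned modules: an injective endomorphism of an Artinian module is surjective, and a surjective endomorphism of a Noetherian module is injective. In each case I would apply the relevant fact to $\varphi_a$. Observe that in both statements the conclusion amounts to showing that \emph{both} $Ma=M$ and $\text{Ann}_M(a)=0$ hold, for then $M/Ma=M/M=0=\text{Ann}_M(a)$ and the required isomorphism $M/Ma\cong\text{Ann}_M(a)$ is just $0\cong 0$. So the whole task is to upgrade the single hypothesis ($\ker(\varphi_a)=0$ in part (1), $\varphi_a(M)=M$ in part (2)) to the other condition.

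For (1), the hypothesis $\text{Ann}_M(a)=0$ says precisely that $\ker(\varphi_a)=0$, i.e.\ $\varphi_a$ is injective. First I would show $\varphi_a$ is surjective by contradiction: if $Ma=\varphi_a(M)\subsetneq M$, then since an injective map sends a proper inclusion to a proper inclusion (if $\varphi_a(A)=\varphi_a(B)$ with $A\subseteq B$ then injectivity forces $A=B$), iterating $\varphi_a$ yields a strictly descending chain $M\supsetneq\varphi_a(M)\supsetneq\varphi_a^2(M)\supsetneq\cdots$, contradicting the Artinian hypothesis. Hence $Ma=M$, and $M$ is $a$-morphic.

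For (2), the hypothesis $Ma=M$ says $\varphi_a$ is surjective, and I would run the dual argument. The kernels form an ascending chain $\ker(\varphi_a)\subseteq\ker(\varphi_a^2)\subseteq\cdots$, which stabilises by the Noetherian hypothesis, say $\ker(\varphi_a^n)=\ker(\varphi_a^{n+1})$. For $x\in\ker(\varphi_a)$, surjectivity of $\varphi_a^n$ gives some $y$ with $\varphi_a^n(y)=x$; then $\varphi_a^{n+1}(y)=\varphi_a(x)=0$ places $y\in\ker(\varphi_a^{n+1})=\ker(\varphi_a^n)$, so $x=\varphi_a^n(y)=0$. Thus $\varphi_a$ is injective, $\text{Ann}_M(a)=0$, and again $M$ is $a$-morphic. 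The two halves are perfectly dual, and the only point needing care — the closest thing to an obstacle — is verifying that injectivity (resp.\ surjectivity) of $\varphi_a$ is inherited by its iterates so that the chains are genuinely strict (resp.\ have the stated consequence); both follow immediately since a composite of injections is an injection and a composite of surjections is a surjection. One could alternatively phrase each conclusion through condition (2) of Lemma~\ref{x} by taking $\psi$ to be the zero map, but the direct chain-condition argument is cleaner.
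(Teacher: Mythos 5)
Your proof is correct and takes essentially the same approach as the paper: both arguments reduce to the classical fact that an injective endomorphism of an Artinian module, or a surjective endomorphism of a Noetherian module, is an automorphism, whence $M/Ma = 0 \cong 0 = \text{Ann}_M(a)$. The only difference is that the paper cites this fact from Anderson--Fuller \cite[Lemma 11.6]{anderson1992rings}, whereas you supply the standard strictly-descending-image and stabilising-kernel chain arguments yourself.
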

\begin{proof}
 For any $a\in R,  \text{Ann}_{M}(a) = 0$ implies $\varphi_a\in S$ is a monomorphism, and $Ma=M$ implies $\varphi_a$ is an epimorphism.  By  \cite[Lemma 11.6]{anderson1992rings}, each case implies that  $\varphi_a$ is an automorphism.   Thus $\varphi_a$ is  morphic.
\end{proof}

\begin{proposition}~\label{efr} Let $a\in R$ and  $M$ be an $a$-morphic $R$-module.  The following statements are equivalent:
\begin{enumerate}[(1)]
\item[\emph{(1)}] $\text{Ann}_{M}(a)=0$;
\item[\emph{(2)}] $Ma = M$;
\item[\emph{(3)}] $\varphi_a\in S$  is an automorphism of $M$.
\end{enumerate}
\end{proposition}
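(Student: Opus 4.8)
The plan is to leverage the defining $a$-morphic isomorphism $M/Ma\cong\text{Ann}_{M}(a)$, which directly couples the cokernel and the kernel of $\varphi_a$. The crucial observation is that whenever one of these two $R$-modules vanishes, the isomorphism forces the other to vanish as well. With this in hand, I would establish the three equivalences as a single cycle $(1)\Rightarrow(2)\Rightarrow(3)\Rightarrow(1)$.

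For $(1)\Rightarrow(2)$, I would start from $\text{Ann}_{M}(a)=0$ and feed it into the isomorphism to obtain $M/Ma\cong 0$, whence $Ma=M$. For $(2)\Rightarrow(3)$, I would observe that $Ma=M$ says precisely that $\varphi_a$ is surjective; running the isomorphism in the reverse direction, $M/Ma=0$ gives $\text{Ann}_{M}(a)\cong 0$, so $\ker(\varphi_a)=0$ and $\varphi_a$ is also injective. A surjective and injective endomorphism is an automorphism, which is exactly $(3)$. Finally, $(3)\Rightarrow(1)$ is immediate, since an automorphism is in particular injective, so $\text{Ann}_{M}(a)=\ker(\varphi_a)=0$.

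I expect essentially no obstacle here: the entire content reduces to the single remark that the $a$-morphic hypothesis equates the vanishing of $\text{Ann}_{M}(a)$ with the vanishing of $M/Ma$, after which the automorphism assertion follows by combining injectivity with surjectivity. In particular, I would need no finiteness or chain-condition hypothesis. This is in contrast with Corollary~\ref{hth}, where one of injectivity or surjectivity was upgraded to bijectivity using an Artinian or Noetherian assumption via \cite[Lemma 11.6]{anderson1992rings}; here the $a$-morphic hypothesis already supplies the missing half for free, playing the role that the chain condition played there.
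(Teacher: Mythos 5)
Your proof is correct and follows essentially the same route as the paper's: both rest entirely on the observation that the $a$-morphic isomorphism $M/Ma\cong\text{Ann}_{M}(a)$ forces the kernel and cokernel of $\varphi_a$ to vanish together, with the automorphism statement then obtained by combining injectivity and surjectivity. The only difference is organizational (you argue the cycle $(1)\Rightarrow(2)\Rightarrow(3)\Rightarrow(1)$, the paper proves $(1)\Leftrightarrow(2)$ and $(2)\Leftrightarrow(3)$ directly), which is immaterial.
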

\begin{proof}~
(1)$\Leftrightarrow$(2) Since, by hypothesis, $M/Ma\cong \text{Ann}_{M}(a)$, we have $Ma = M$ if and only if $\text{Ann}_{M}(a)= 0$.

(2)$\Leftrightarrow$(3)
Assume that (2) holds.  Then  $0\cong M/Ma\cong \text{Ann}_{M}(a)$ because $M$ is $a$-morphic.   This gives $\ker(\varphi_a)=0,\varphi_a$ is injective and thus an automorphism of $M$.  Conversely,  the   surjectivity of  $\varphi_a$ gives  $Ma=\varphi_a(M)=M$.
\end{proof}

\noindent The hypothesis `$a$-morphic' cannot be dropped in Proposition~\ref{efr} as seen in Example~\ref{deted}.
\begin{Ex}~\label{deted}
Since $\text{Ann}_{\Bbb Z}(a)=0$ for all $0\neq a\in \Bbb Z$ and  $\varphi_a\notin \text{Aut}_{\Bbb Z}(\Bbb Z)$  when $a\neq\pm 1$,  by Proposition~\ref{efr},  $\mathbb{Z}_{\mathbb{Z}}$ is  never $a$-morphic for any $0,\pm 1\neq a\in \Bbb Z$.
By a similar argument, the free $\Bbb Z$-module $\Bbb Z^n$, for any integer $n>0$, is not $a$-morphic for any $0,\pm 1\neq a\in \Bbb Z$.
For the Pr$\ddot{\text{u}}$fer $p$-group $\Bbb Z(p^\infty)$ ($p$ a prime),  any non-zero endomorphism  is surjective but not injective, in particular also $\varphi_a$.   Thus $\Bbb Z(p^\infty)$  is  not $a$-morphic as a $\Bbb Z$-module for any $\pm 1\neq a\in \Bbb Z$.
\end{Ex}

\noindent Proposition~\ref{efr}  retrieves \cite[Proposition 6]{nicholson2004rings} in Corollary~\ref{efs} for commutative rings by taking $M=R$.

\begin{corollary}~\label{efs} The following statements are equivalent for a morphic element  $a$ of $R$:
\begin{enumerate}[(1)]
\item[\emph{(1)}] $\text{Ann}_{R}(a)=0$;
\item[\emph{(2)}] $aR = R$;
\item[\emph{(3)}] $a\in U(R)$.
\end{enumerate}
\end{corollary}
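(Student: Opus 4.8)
The plan is to specialise Proposition~\ref{efr} to the module $M = R_R$ and then translate the three module-theoretic conditions into their ring-theoretic counterparts. The first point to secure is that the hypothesis is actually usable: by the remark preceding Corollary~\ref{y}, the module $R_R$ is $a$-morphic precisely when $a$ is a morphic element of $R$. Since $a$ is assumed to be morphic, $R_R$ is $a$-morphic, and so Proposition~\ref{efr} applies with $M = R_R$.

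Next I would translate each of the three equivalent conditions of Proposition~\ref{efr}. For the kernel and image conditions the translation is immediate from commutativity: with $M = R_R$ we have $\text{Ann}_{R_R}(a) = \{r \in R : ra = 0\} = \text{Ann}_R(a)$, so condition (1) of the proposition is exactly condition (1) of the corollary; and $R_R\cdot a = Ra = aR$, so condition (2) of the proposition reads $aR = R$, which is condition (2) of the corollary.

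The only condition requiring a genuine (if short) argument is the third, namely that $\varphi_a$ is an automorphism of $R_R$ if and only if $a \in U(R)$. If $a$ is a unit, then $\varphi_{a^{-1}}$ is a two-sided inverse of $\varphi_a$, since $\varphi_a\varphi_{a^{-1}}(r) = ra^{-1}a = r$ and symmetrically, so $\varphi_a$ is an automorphism. Conversely, if $\varphi_a$ is an automorphism then it is surjective, so $1 = \varphi_a(b) = ba$ for some $b \in R$; commutativity then gives $ab = ba = 1$, whence $a \in U(R)$.

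Assembling these identifications, Proposition~\ref{efr} immediately yields the equivalence of (1), (2) and (3) as stated. I do not expect any real obstacle: the substantive content is entirely carried by Proposition~\ref{efr}, and the remaining work is the routine dictionary between endomorphisms of $R_R$ and elements of $R$ acting by right multiplication, which is transparent here because $R$ is commutative. (Indeed, one could even bypass the proposition for the link between (2) and (3), since in a commutative ring $aR = R$ is equivalent to $1 \in aR$, i.e. to $a$ being a unit; but routing everything through Proposition~\ref{efr} keeps the derivation uniform.)
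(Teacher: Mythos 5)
Your proposal is correct and matches the paper's own treatment: the paper gives no separate proof, stating only that Corollary~\ref{efs} follows by taking $M=R$ in Proposition~\ref{efr}, which is precisely your route. Your filling-in of the dictionary (morphic element $\Leftrightarrow$ $R_R$ being $a$-morphic, $\text{Ann}_{R_R}(a)=\text{Ann}_R(a)$, $Ra=aR$, and $\varphi_a$ an automorphism $\Leftrightarrow$ $a\in U(R)$) is exactly the routine verification the paper leaves implicit.
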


\begin{lemma}~\label{trace} Let $R$ be an Artinian ring and  $M$ be a nontrivial $R$-module.
\begin{enumerate}[(1)]
\item[\emph{(1)}] If $\text{Ann}_M(a)=0,  a\in R$, then $M$ is $a$-morphic. 
\item[\emph{(2)}] If $Ma=M, a\in R$, then $M$ is $a$-morphic.
\end{enumerate}
\end{lemma}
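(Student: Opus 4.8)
The plan is to reduce both parts to a single statement: over a commutative Artinian ring $R$, for any $a \in R$ the endomorphism $\varphi_a$ is an automorphism of $M$ as soon as it is injective (i.e. $\text{Ann}_M(a)=0$) or surjective (i.e. $Ma=M$). Once $\varphi_a$ is an automorphism we have $Ma=M$ and $\text{Ann}_M(a)=0$, so $M/Ma=0\cong\text{Ann}_M(a)$ and $M$ is $a$-morphic by definition. This is precisely the point at which Corollary~\ref{hth} cannot be invoked directly: there $M$ is assumed Artinian or Noetherian \emph{as a module}, whereas here $M$ is an arbitrary module over an Artinian ring and so need not satisfy either chain condition (for instance an infinite-dimensional vector space over a field).

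The engine will be a Fitting-type splitting of $R$ attached to the fixed element $a$. Since $R$ is Artinian, the chain $aR\supseteq a^2R\supseteq\cdots$ stabilises, say $a^nR=a^{2n}R$, so $a^n=a^{2n}c$ for some $c\in R$. I would set $e:=a^nc$ and $b:=a^{n-1}c$ and verify the three relations $e^2=e$, $a^ne=a^n$ (hence $a^n(1-e)=0$), and $ab=e$. Thus $a$ is nilpotent on the ideal $(1-e)R$ and a unit on $eR$, since $(ae)(be)=abe^2=e$.

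Granting these relations, each part closes by a short direct computation. For (1), from $\text{Ann}_M(a)=0$ one gets $\text{Ann}_M(a^n)=0$ by iterating injectivity; then for every $m\in M$ the element $m(1-e)$ is killed by $a^n$ because $(1-e)a^n=0$, so $m(1-e)=0$ and $M=Me$. Using $e=ab$ this yields $m=me=m(ab)=(mb)a\in Ma$, so $Ma=M$ and $\varphi_a$ is an automorphism. For (2), from $Ma=M$ one gets $Ma^n=M$; since $a^n=a^ne$ this forces $M=Ma^n\subseteq Me$, hence $M=Me$, and then $ma=0$ gives $m=me=m(ab)=(ma)b=0$, so $\text{Ann}_M(a)=0$ and $\varphi_a$ is again an automorphism.

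The only genuinely non-formal step, and thus the main obstacle, is producing the idempotent $e$ and confirming that $a$ restricts to a unit on $eR$; everything after that is bookkeeping with $e^2=e$, $a^n(1-e)=0$ and $ab=e$. I would watch two degenerate points: the reading $a^0=1$ when $n=1$ in $b=a^{n-1}c$, and the case $e=0$ (equivalently $a^n=0$, so $a$ acts nilpotently on $M$), in which the computations above simply collapse to $M=Me=0$, consistent with the fact that a nilpotent endomorphism that is injective or surjective forces $M=0$ and so cannot occur for the nontrivial $M$ of the hypothesis.
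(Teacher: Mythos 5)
Your proof is correct, and it takes a genuinely different route from the paper's. Both arguments draw on the same finiteness input---the chain $aR \supseteq a^2R \supseteq \cdots$ stabilises because $R$ is Artinian, giving $a^n = a^{2n}c$ for some $n$ and $c$---but you spend it inside the ring, whereas the paper spends it inside the module. You package the stabilisation into an explicit Fitting idempotent $e = a^nc$ with $ab = e$ and $a^n(1-e) = 0$ (in effect, the fact that a commutative Artinian ring is strongly $\pi$-regular, so $a$ is invertible on $eR$ and nilpotent on $(1-e)R$); after that each part is a two-line computation: $M = Me$ and $m = (mb)a$ in case (1), resp. $M = Me$ and $ma = 0 \Rightarrow m = (ma)b = 0$ in case (2). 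The paper instead transfers the stabilisation to $M$ through the identities $Ma^k = M(a^kR)$ and $\text{Ann}_M(a^k) = \text{Ann}_M(a^kR)$, so that $Ma^n = Ma^{2n}$ and $\text{Ann}_M(a^n) = \text{Ann}_M(a^{2n})$, and then runs the classical Fitting element-chase twice: in (1), $xa^n = ya^{2n}$ together with $\text{Ann}_M(a^n) = 0$ gives $M = Ma^n \subseteq Ma$; in (2), writing $x = ya^n$ for $x \in \text{Ann}_M(a^n)$ forces $y \in \text{Ann}_M(a^{2n}) = \text{Ann}_M(a^n)$, hence $x = 0$. Your version buys a clean separation of concerns (all the finiteness lives in $R$, the module part is bookkeeping) and it visibly generalises beyond Artinian rings to any commutative strongly $\pi$-regular ring; the paper's version avoids manufacturing idempotents and quasi-inverses, staying entirely at the level of submodule chains of $M$, which keeps it elementary and closer to the standard proof of Fitting's lemma. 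Both finish with the same endgame: injectivity plus surjectivity make $\varphi_a$ an automorphism, whence $M/Ma = 0 \cong \text{Ann}_M(a)$.
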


\begin{proof}~
Let $R$ be an Artinian ring and $a\in R$.  Then the descending chain $aR\supseteq a^2R\supseteq\cdots\supseteq a^nR\supseteq a^{n+1}R\supseteq\cdots$  stabilises.  Since $Ma^n=Ma^nR$ and $\text{Ann}_{M}(a^n)=\text{Ann}_{M}(a^nR)$  for every $n\in \Bbb Z^{+}$, both the  decreasing and increasing sequences  $Ma\supseteq Ma^2\supseteq\cdots\supseteq Ma^n\supseteq Ma^{n+1}\supseteq\cdots$ and  $\text{Ann}_{M}(a)\subseteq \text{Ann}_{M}(a^2)\subseteq\cdots\subseteq \text{Ann}_{M}(a^n)\subseteq \text{Ann}_{M}(a^{n+1})\subseteq\cdots$, respectively,  terminate as well.
\begin{itemize}
\item [$(1)$]
  Suppose that $\text{Ann}_{M}(a)=0=\ker(\varphi_a)$.    If $n\geq1$ is such that $Ma^n=Ma^{n+1}$, then $Ma^t=Ma^{t+1}$ for every $t\geq n$ and so    $Ma^n=Ma^{2n}$.    If $x\in M$, then $xa^n\in Ma^n=Ma^{2n}$  so that  $xa^n=ya^{2n}$ for some $y\in M$.   This implies $(x-ya^n)a^n=0$ so that $x-ya^n\in \text{Ann}_{M}(a^n)=0$.  Now $x=ya^n$ gives $M=Ma^n$.   Since $M=Ma^n$ implies that $M=Ma, \varphi_a$ is an automorphism of $M$.  Hence $M/Ma\cong0= \text{Ann}_{M}(a)$.
\item [$(2)$] Suppose that $Ma=\varphi_a(M)=M$.   If $n\geq1$ is such that $\text{Ann}_{M}(a^n)=\text{Ann}_{M}(a^{n+1})$, then $\text{Ann}_{M}(a^{t})=\text{Ann}_{M}(a^{t+1})$ for every $t\geq n$ so that   $\text{Ann}_{M}(a^n)=\text{Ann}_{M}(a^{2n})$.   Let $x \in \text{Ann}_{M}(a^n)=M\cap \text{Ann}_{M}(a^n)=Ma\cap \text{Ann}_{M}(a^n)=Ma^n\cap \text{Ann}_{M}(a^n)$.  Then $x=ya^n$ for some $y\in M$ and  $0 =xa^n=ya^{2n}$.
     This implies that $y\in \text{Ann}_{M}(a^{2n})=\text{Ann}_{M}(a^n)$; from which we get $x=ya^n= 0$.
Thus  $\text{Ann}_{M}(a^n)=0$.   Since  $\text{Ann}_{M}(a)\subseteq\text{Ann}_{M}(a^n)=0, \text{Ann}_{M}(a)=0$.  This gives $M/Ma\cong0=\text{Ann}_{M}(a)$.
\end{itemize}
\end{proof}

\begin{remark}~\label{has}%
\begin{enumerate}[(a)]
\item[(a)] Lemma~\ref{x} (3) shows that being $a$-morphic (resp., weakly-morphic) is a categorical property, that is, it can be expressed entirely in terms of objects and morphisms.  Therefore, if $R$ and $S$ are rings and $\mathcal{F}:\text{\bf Mod-}R \to \text{\bf Mod-}S$  is a category
equivalence, then an $R$-module $M$ is $a$-morphic (resp. weakly-morphic) if and only if so is the $S$-module $\mathcal{F}(M)$.
\item[(b)] Let $R$ be a ring, $M$ and $N$ be $R$-modules,  $a\in R$ and $\varphi_a\in S$.
The induced functor $\varphi_a^\ast :=\operatorname{Hom}_R(\varphi_a,N): \operatorname{Hom}_R(M, N) \to \operatorname{Hom}_R(M, N),  \psi\mapsto \psi \varphi_a$ for all $\psi:M\to N$ is also multiplication by $a$ (see \cite[pg. 67]{rotman2008introduction}).
 If  $\varphi_a$ is injective,  then  $\varphi_a^\ast :\text{Hom}_R(M,E)\to\text{Hom}_R(M,E)$ is epic for any injective  $R$-module  $E$.
\end{enumerate}
\end{remark}

\noindent An $R$-module $C$ is called a {\it cogenerator} of {\bf Mod}-$R$ if, for every nontrivial $R$-module $M$ and every non-zero $m\in M$, there exists an $R$-homomorphism  $\phi: M\to C$ with $\phi(m)\neq 0$ (see \cite[pg. 264]{rotman2008introduction}).    An $R$-module $E$ is called an {\it injective cogenerator} of {\bf Mod}-$R$ if $E$ is both an injective $R$-module and a cogenerator of  {\bf Mod}-$R$.
It is well known that for any ring $R$,  {\bf Mod}-$R$  has an injective cogenerator (see \cite[Lemma 5.49]{rotman2008introduction}).   Under some special conditions, we now characterise the Artinian-Noetherian rings in terms of $a$-morphic modules.

\begin{proposition}~\label{requ}
Let $R$ be a Noetherian ring and $M$ an $R$-module.  Then the following statements are equivalent:
\begin{enumerate}[(1)]
\item[\emph{(1)}]$R$ is  Artinian.
\item[\emph{(2)}] If $\text{Ann}_{M}(a)=0, a\in R$, then $M$  is $a$-morphic.
\item[\emph{(3)}] If  $M=Ma, a\in R$, then $M$  is $a$-morphic.
\end{enumerate}
\end{proposition}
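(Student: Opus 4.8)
The plan is to read statements (2) and (3) as ranging over \emph{all} $R$-modules $M$ (a single fixed $M$ cannot possibly detect Artinianness of $R$). Under this reading the two forward implications are immediate: if $R$ is Artinian then Lemma~\ref{trace}(1) yields (2) and Lemma~\ref{trace}(2) yields (3) for every nontrivial $M$, the trivial module being vacuous. The content therefore lies in the two converses, and the first move I would make is to reformulate (2) and (3) via Proposition~\ref{efr}. For an $a$-morphic module one has $\mathrm{Ann}_M(a)=0\Leftrightarrow Ma=M\Leftrightarrow\varphi_a\in\mathrm{Aut}(M)$; conversely, $Ma=M$ together with $\mathrm{Ann}_M(a)=0$ already forces $M/Ma=0\cong\mathrm{Ann}_M(a)$, i.e.\ $a$-morphicity. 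Hence (2) is equivalent to saying that on every $R$-module an injective $\varphi_a$ is automatically surjective, and (3) is equivalent to saying that on every $R$-module a surjective $\varphi_a$ is automatically injective. This is the reformulation that makes the converses tractable.

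For (2)$\Rightarrow$(1) I argue contrapositively. A commutative ring is Artinian iff it is Noetherian of Krull dimension $0$, so if $R$ is Noetherian but not Artinian there is a non-maximal prime $\mathfrak{p}$, and $D:=R/\mathfrak{p}$ is a Noetherian domain that is not a field. Choose $a\in R$ whose image $\bar a\in D$ is a non-zero non-unit. On the $R$-module $M=D$, multiplication by $a$ is $\varphi_{\bar a}$, which is injective ($D$ a domain, $\bar a\neq 0$) while $Ma=\bar a D\neq D$ (a non-unit does not generate the unit ideal). Thus $\mathrm{Ann}_M(a)=0$ yet $Ma\neq M$, so by Proposition~\ref{efr} the module $M$ is not $a$-morphic and (2) fails.

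For (3)$\Rightarrow$(1) I would manufacture the ``dual'' witness on the same data $D,\bar a$, and there are two routes. Directly: let $Q=\mathrm{Frac}(D)$ and take $M=Q/D$ (an $R$-module via $R\twoheadrightarrow D$); then $\varphi_{\bar a}$ is surjective, since each $q+D$ equals $\bar a\,(q/\bar a)+D$, while its kernel $(\bar a^{-1}D)/D$ is non-zero precisely because $\bar a$ is a non-unit, so $Ma=M$ but $\mathrm{Ann}_M(a)\neq 0$ and Proposition~\ref{efr} again denies $a$-morphicity. Alternatively, and more in the spirit of the injective-cogenerator setup preceding the statement, I would dualize the module $D$ from the (2)-argument against an injective cogenerator $E$: by Remark~\ref{has}(b) the injectivity of $\varphi_a$ on $D$ makes multiplication by $a$ on $N=\mathrm{Hom}_R(D,E)$ surjective, whereas the non-surjectivity of $\varphi_a$ on $D$ makes it non-injective because $\mathrm{Hom}_R(-,E)$ is faithful for a cogenerator. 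Either way (3) fails, closing the cycle $(1)\Leftrightarrow(2)\Leftrightarrow(3)$.

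The main obstacle I anticipate is twofold. First, one must recognize that (2) and (3) are to be quantified over all $M$ and then exploit Proposition~\ref{efr} to recast them as the clean ``mono$\Rightarrow$epi'' and ``epi$\Rightarrow$mono'' statements above; without this recasting the converses look opaque. Second, building the surjective-non-injective witness for (3)$\Rightarrow$(1)—the divisible module $Q/D$, or equivalently the cogenerator dual $\mathrm{Hom}_R(D,E)$—is genuinely more delicate than the cyclic witness $R/\mathfrak{p}$ used for (2), since it requires either the divisibility computation or the duality of Remark~\ref{has}(b). The commutative-algebra input ``Noetherian and not Artinian $\Rightarrow$ a non-maximal prime exists'' is the standard dimension-$0$ characterization of Artinian rings, which I would cite rather than reprove.
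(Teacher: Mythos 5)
Your proof is correct, and it is organized genuinely differently from the paper's. The paper proves the single cycle (1)$\Rightarrow$(3)$\Rightarrow$(2)$\Rightarrow$(1): it gets (1)$\Rightarrow$(3) from Lemma~\ref{trace}, then proves (3)$\Rightarrow$(2) by a duality transfer -- if $\varphi_a$ is monic on $M$, then multiplication by $a$ on $\text{Hom}_R(M,E)$ ($E$ an injective cogenerator) is epic by Remark~\ref{has}(b), hence an automorphism by (3) and Proposition~\ref{efr}, and this pulls back to make $\varphi_a$ an automorphism of $M$ because $\text{Hom}_R(-,E)$ is faithfully exact \cite{ishikawa1964faithfully} -- and finally proves (2)$\Rightarrow$(1) exactly as you do, via Cohen's theorem \cite{cohen1950commutative} and the witness $M=R/P$ with $a\in Q\setminus P$, except that the paper cites \cite{vasconcelos1970injective} for ``monic but not epic'' where you argue it directly from $R/P$ being a domain and $\bar a$ a non-unit. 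You instead prove the two equivalences $(1)\Leftrightarrow(2)$ and $(1)\Leftrightarrow(3)$ separately, so you need a second counterexample construction for (3)$\Rightarrow$(1); your Route A witness $\text{Frac}(D)/D$ (divisible, hence $Ma=M$, with non-zero $a$-torsion witnessed by $\bar a^{-1}+D$) is a clean, elementary way to do this that avoids faithful exactness altogether, while your Route B is essentially the paper's cogenerator duality repurposed as a counterexample rather than as the general implication (3)$\Rightarrow$(2). What your route buys: concreteness (explicit failing modules for both (2) and (3)) and, via Route A, independence from the Ishikawa machinery; what the paper's route buys: only one counterexample construction is needed, and the duality step establishes the standalone fact that (3)$\Rightarrow$(2) holds over any Noetherian ring. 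Your preliminary observations -- that (2) and (3) must be read as quantified over all $R$-modules $M$ (the paper's own proof confirms this, since it applies (3) to $\text{Hom}_R(M,E)$ and (2) to $R/P$), and that Proposition~\ref{efr} recasts them as ``mono $\Rightarrow$ epi'' and ``epi $\Rightarrow$ mono'' for the maps $\varphi_a$ -- are both accurate and are implicitly how the paper uses these statements as well.
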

\begin{proof}~
\begin{itemize}
\item[(1)$\Rightarrow$(3)] Follows from Lemma~\ref{trace}.
\item[(3)$\Rightarrow$(2)]  Let  $E$ be the injective cogenerator of {\bf Mod}-$R, M\in {\bf Mod}\text{-}R$  and   $\varphi_a\in S$.   If $\text{Ann}_{M}(a)=0$, then, by Remark~\ref{has} (b), the functor $$\varphi_a^\ast:\text{Hom}_R(M,E)\to \text{Hom}_R(M,E)$$ is right multiplication by $a$ and epic.   Since, by (3), $\text{Hom}_R(M,E)$ is $a$-morphic as an $R$-module, $\varphi_a^\ast$ is an automorphism by Proposition~\ref{efr}.   Hence $\varphi_a$ is also an automorphism of $M$  since, by \cite[Definition 3 and Theorem 3.1]{ishikawa1964faithfully},  $\varphi_a^\ast$ is   faithfully exact.  Thus (2) follows from Lemma~\ref{x}.
\item[(2)$\Rightarrow$(1)] In view of \cite[Theorem 1]{cohen1950commutative}, it is enough to prove that every prime ideal of  $R$ is maximal.
Let $P$ be any prime ideal of $R$ which is not maximal.   Then there  exists a proper ideal $Q$ of $R$ which is strictly greater than $P$ and an element $a$ in $Q$ but not in $P$ such that,  by \cite[pg. 900]{vasconcelos1970injective}, the induced endomorphism $\varphi_a$ of the $R$-module $M$:$=R/P$ is a monomorphism which is not an epimorphism.  So $\text{Ann}_M(a)=0$.    Since  $M$ is $a$-morphic  by (2),  $\varphi_a$ is an automorphism of $M$ by Proposition~\ref{efr}, which is a contradiction.
\end{itemize}
\end{proof}

\begin{remark}
As an application of Proposition~\ref{requ}, note that $\Bbb Z_{\Bbb Z}$ is a module over  a commutative and  Noetherian ring  with $\text{Ann}_{\Bbb Z}(a)=0$ for each $a\in \Bbb Z$.   However,  $\Bbb Z$ is not Artinian since $\Bbb Z_{\Bbb Z}$ is not $a$-morphic for each $a\in \Bbb Z$ by Example~\ref{deted}.    For the Pr$\ddot{\text{u}}$fer $p$-group $M:=\Bbb Z(p^\infty)$ ($p$ a prime),  $M=Ma$ for each $a\in \Bbb Z$, but $M$  is not $a$-morphic for $a\neq0,\pm1$  by Example~\ref{deted}.  Hence $\Bbb Z$ is not Artinian.
\end{remark}
\noindent A global version of Proposition~\ref{requ} for integral domains is given in Section 4.   The following Proposition~\ref{51} gives further characterisation of weakly-morphic modules.

\begin{proposition}~\label{51}
Let $R$ be a ring and $M$  an $R$-module. The following statements are equivalent.
\begin{enumerate}[(1)]
\item[\emph{(1)}] $M$ is  weakly-morphic.
\item[\emph{(2)}] $M_D$ is a weakly-morphic $R_D$-module for each multiplicative set $D\subseteq R$.
\item[\emph{(3)}] Each element of $R/\text{Ann}_R(M)$ is  morphic as an endomorphism  of $M$.
\item[\emph{(4)}] $M$ is weakly-morphic as an $R/A$-module for  every ideal $A\subseteq\text{Ann}_R(M)$.
\end{enumerate}
\end{proposition}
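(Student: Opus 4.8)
The plan is to treat (1) as a hub and prove (1)$\Leftrightarrow$(3), (1)$\Leftrightarrow$(4) and (1)$\Leftrightarrow$(2) separately, since in the first two cases one direction is essentially formal. Statement (1)$\Leftrightarrow$(3) is a reformulation of the definition: the elements of $S_M\cong R/\text{Ann}_R(M)$ are precisely the endomorphisms $\varphi_a$ with $a\in R$, and $\varphi_a$ is morphic exactly when $M/Ma\cong\text{Ann}_M(a)$, i.e.\ when $M$ is $a$-morphic. Since $M$ is weakly-morphic if and only if it is $a$-morphic for every $a\in R$, (1) and (3) say the same thing. For (1)$\Leftrightarrow$(4), I would use that $A\subseteq\text{Ann}_R(M)$ forces $A$ to annihilate $M$, and hence every subquotient of $M$. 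Thus for $\bar a\in R/A$ the subsets literally coincide, $M\bar a=Ma$ and $\text{Ann}_M(\bar a)=\text{Ann}_M(a)$, and an additive map between subquotients of $M$ is $R$-linear if and only if it is $R/A$-linear. Consequently $M$ is $a$-morphic over $R$ precisely when it is $\bar a$-morphic over $R/A$; letting $a$ range over $R$ (equivalently $\bar a$ over $R/A$) gives weakly-morphic over $R/A$, and the choice $A=0$ recovers (1).

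The substantive part is (1)$\Leftrightarrow$(2). The direction (2)$\Rightarrow$(1) is immediate on taking $D=\{1\}$, so the work lies in (1)$\Rightarrow$(2). Fix a multiplicative set $D\subseteq R$ and an arbitrary element $r/d\in R_D$. Because $1/d\in U(R_D)$, the induced endomorphism $\varphi_{1/d}$ of $M_D$ is an automorphism, and writing $\varphi_{r/d}=\varphi_{r/1}\varphi_{1/d}$ shows that $\varphi_{r/d}$ has the same image $M_D(r/1)$ as $\varphi_{r/1}$ and a kernel $\text{Ann}_{M_D}(r/d)\cong\text{Ann}_{M_D}(r/1)$ (the two kernels differing by the automorphism $\varphi_{1/d}$). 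Hence $M_D$ is $(r/d)$-morphic if and only if it is $(r/1)$-morphic, and it suffices to treat elements of the form $r/1$. I would then apply the exact localization functor $(-)_D$ to the sequences $0\to Mr\to M\to M/Mr\to 0$ and $0\to\text{Ann}_M(r)\to M\stackrel{\varphi_r}{\to}M$; since localization commutes with images and kernels, this yields $R_D$-isomorphisms $(M/Mr)_D\cong M_D/M_D(r/1)$ and $(\text{Ann}_M(r))_D\cong\text{Ann}_{M_D}(r/1)$. Applying $(-)_D$ to the $R$-isomorphism $M/Mr\cong\text{Ann}_M(r)$ furnished by (1) and composing gives $M_D/M_D(r/1)\cong\text{Ann}_{M_D}(r/1)$ as $R_D$-modules, so $M_D$ is weakly-morphic.

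The main obstacle is precisely this last step. One must verify two things carefully: first, that the reduction from $r/d$ to $r/1$ through the unit $1/d$ genuinely preserves the image and only alters the kernel by an isomorphism, so that the weakly-morphic condition need only be checked on the subring of ``integral'' fractions; and second, that the localized maps are honestly $R_D$-linear, which holds because $(-)_D$ is a functor from $R$-modules to $R_D$-modules and therefore carries $R$-isomorphisms to $R_D$-isomorphisms. Once the commutation of localization with image and annihilator is recorded, every other implication is formal.
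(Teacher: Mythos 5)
Your proof is correct, and its skeleton --- hub at (1), the trivial direction (2)$\Rightarrow$(1) via $D=\{1\}$, the definitional identification (1)$\Leftrightarrow$(3), and the annihilator/linearity argument linking (1) and (4) --- matches the paper's proof (the paper routes the last equivalence as (3)$\Leftrightarrow$(4), an immaterial difference). The genuine divergence is in (1)$\Rightarrow$(2). The paper argues abstractly: by its Lemma~\ref{x}(3), weak-morphicity is witnessed by an exact sequence ${\cdots}\stackrel{\psi}{\longrightarrow}M\stackrel{\varphi_a}{\longrightarrow}M\stackrel{\psi}{\longrightarrow}{\cdots}$, and since the localisation functor is exact this witness localises; it then invokes its Remark~\ref{has}(a). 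You instead localise the isomorphism $M/Mr\cong\text{Ann}_M(r)$ directly, using that $(-)_D$ commutes with images and kernels. Both routes rest on exactness of localisation, but yours is more complete on a point the paper leaves implicit: the exactness argument only produces $(r/1)$-morphicity, i.e.\ it handles the endomorphisms $\varphi_{r/1}$ coming from the image of $R\to R_D$, whereas weak-morphicity of $M_D$ requires all fractions $r/d$. Your reduction of $\varphi_{r/d}$ to $\varphi_{r/1}$ via the automorphism $\varphi_{1/d}$ is exactly the step needed to close this gap (indeed, since $R_D$ is commutative and $\varphi_{1/d}$ is injective and surjective, one even gets $\ker(\varphi_{r/d})=\ker(\varphi_{r/1})$ and $\varphi_{r/d}(M_D)=\varphi_{r/1}(M_D)$ on the nose); the paper's appeal to Remark~\ref{has}(a) does not cover it, since that remark is stated for category equivalences and localisation is not one. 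So your write-up is, if anything, a more careful and self-contained version of the intended argument, at the cost of redoing by hand what the paper disposes of with one citation.
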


\begin{proof}~
\begin{itemize}
\item [(1)$\Rightarrow$(2)] Let $M_R$ be a weakly-morphic module, {\bf Mod-}$R$  be a category of all $R$-modules, $D\subseteq R$ be a multiplicative set of $R$, and  $R_D$ and $M_D$ be the localisation of $R$ and $M$ respectively at $D$.  Since   the localisation functor $${\bf Mod}\text{-}R\to {\bf Mod}\text{-}R_D, M\mapsto M_D$$ is  exact by \cite[Corollary 4.81]{rotman2008introduction},  $M_D$ is   weakly-morphic  by   Remark~\ref{has} (a).
\item [(2)$\Rightarrow$(1)] This follows immediately by taking a multiplicative subset  $D=\{1\}$ of $R$.
\item [(1)$\Leftrightarrow$(3)] This follows from (1) of Lemma~\ref{x} and \cite[Lemma 1]{nicholson2005morphic}.
\item [(3)$\Leftrightarrow$(4)] Since $M(a+A)=Ma$ and $\text{Ann}_M(a+A)=\text{Ann}_M(a)$ for every $A\subseteq\text{Ann}_R(M)$ and $a\in R$, we have $M/M(a+A)\cong \text{Ann}_M(a+A)$ for every $a+A\in R/A$ if and only if $M/Ma\cong \text{Ann}_M(a)$ for every $a\in R$.
\end{itemize}
\end{proof}

\noindent Weakly-morphic modules are not closed under submodules.  The module $\mathbb{Q}_{\mathbb{Z}}$  is morphic,   but the  submodule $\mathbb{Z}$ of $\mathbb{Q}$ is not weakly-morphic by Example~\ref{deted}.    Unlike for morphic modules (see  \cite[Example 12]{nicholson2005morphic}),  direct sums and direct products of weakly-morphic modules are  weakly-morphic.

\begin{proposition}~\label{p}
Let $\{M_i\}_{i\in I}$  be any nonempty family  of $R$-modules.  If  $M_i$ is a weakly-morphic $R$-module for all $i\in I$, then
\begin{enumerate}
\item[\emph{(1)}] $\prod_{i\in I} M_i$ is a weakly-morphic $R$-module;
\item[\emph{(2)}] $\bigoplus_{i\in I} M_i$ is a weakly-morphic $R$-module.
\end{enumerate}
\end{proposition}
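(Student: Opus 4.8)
The plan is to reduce both parts to a single coordinatewise construction and then invoke the characterisation of $a$-morphic modules in Lemma~\ref{x}. Fix $a\in R$ and write $M=\prod_{i\in I}M_i$ (resp. $M=\bigoplus_{i\in I}M_i$). Since each $M_i$ is weakly-morphic, it is in particular $a$-morphic, so by Lemma~\ref{x}(2) there exists, for every $i\in I$, an endomorphism $\psi_i\in\operatorname{End}_R(M_i)$ with $\ker(\psi_i)=M_i a$ and $\psi_i(M_i)=\text{Ann}_{M_i}(a)$.

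First I would record that right multiplication by $a$ acts coordinatewise on $M$, whence $Ma=\prod_{i\in I}M_i a$ and $\text{Ann}_M(a)=\prod_{i\in I}\text{Ann}_{M_i}(a)$ in the product case, and $Ma=\bigoplus_{i\in I}M_i a$ and $\text{Ann}_M(a)=\bigoplus_{i\in I}\text{Ann}_{M_i}(a)$ in the direct-sum case. The only point to check here is that, in the direct-sum case, a preimage under $\varphi_a$ of an element of $\bigoplus_{i\in I}M_i a$ can be chosen with the same (finite) support, which is immediate since $0=0\cdot a$.

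Next I would assemble the $\psi_i$ into a single endomorphism $\psi$ of $M$ by setting $\psi\big((m_i)_{i\in I}\big)=\big(\psi_i(m_i)\big)_{i\in I}$. For the product this is visibly an $R$-endomorphism; for the direct sum one must observe that $\psi$ preserves finite support (because $\psi_i(0)=0$ for each $i$), so that $\psi$ indeed maps $\bigoplus_{i\in I}M_i$ into itself. A coordinatewise computation then gives $\ker(\psi)=\prod_{i\in I}\ker(\psi_i)=\prod_{i\in I}M_i a=Ma$ and $\psi(M)=\prod_{i\in I}\psi_i(M_i)=\prod_{i\in I}\text{Ann}_{M_i}(a)=\text{Ann}_M(a)$ in the product case, and the identical chain with $\bigoplus$ in place of $\prod$ in the direct-sum case. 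Applying the implication (2)$\Rightarrow$(1) of Lemma~\ref{x} shows that $M$ is $a$-morphic, and since $a\in R$ was arbitrary, $M$ is weakly-morphic.

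The computations are routine, so I do not expect a serious obstacle; the only places needing genuine care are the coordinatewise descriptions of image and kernel and, in the direct-sum case, the verification that $\psi$ together with the equalities $Ma=\bigoplus_{i\in I}M_i a$ and $\text{Ann}_M(a)=\bigoplus_{i\in I}\text{Ann}_{M_i}(a)$ respect finite support. I would present the product and direct-sum arguments in parallel, flagging only where the finite-support condition enters. Equivalently, one could argue directly that $M/Ma\cong\prod_{i\in I}(M_i/M_i a)\cong\prod_{i\in I}\text{Ann}_{M_i}(a)\cong\text{Ann}_M(a)$ using that quotients commute with products and with direct sums, but routing through Lemma~\ref{x} keeps the two cases uniform and sidesteps naming the connecting isomorphisms.
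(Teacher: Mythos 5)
Your proof is correct, but it follows a genuinely different route from the paper's. The paper never constructs an endomorphism of the product or sum: it works directly with quotients, combining the coordinatewise identities $\left(\prod_{i\in I}M_i\right)a=\prod_{i\in I}(M_ia)$ and $\text{Ann}_{\prod_{i\in I}M_i}(a)=\prod_{i\in I}\text{Ann}_{M_i}(a)$ with the fact that quotients commute with products and with direct sums, to get
$\prod_{i\in I}M_i\big/\left(\prod_{i\in I}M_i\right)a\cong\prod_{i\in I}(M_i/M_ia)\cong\prod_{i\in I}\text{Ann}_{M_i}(a)=\text{Ann}_{\prod_{i\in I}M_i}(a)$,
and likewise for $\bigoplus$ --- precisely the alternative you sketch in your closing sentence. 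You instead extract witness endomorphisms $\psi_i$ from Lemma~\ref{x}, (1)$\Rightarrow$(2), assemble them coordinatewise into $\psi$, verify that the kernel and image of the assembled map are computed coordinatewise (with the finite-support check for $\bigoplus$, which you correctly handle by choosing zero preimages at zero coordinates), and then invoke Lemma~\ref{x}, (2)$\Rightarrow$(1). Both arguments ultimately rest on the same two coordinatewise identities for $Ma$ and $\text{Ann}_M(a)$, so neither has a gap; the paper's version is shorter and avoids naming any endomorphism of the big module, while yours produces an explicit witness $\psi$ realising the morphic condition and is exactly the technique the paper itself deploys a little later, in Remark~\ref{ther} and the proof of Proposition~\ref{g}, where the behaviour of kernels and images of coordinatewise-assembled endomorphisms is what carries the harder converse (direct summands of certain weakly-morphic sums). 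In that sense your route is slightly heavier for this particular statement but is the formulation that scales to the subsequent results.
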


\begin{proof}~
\begin{itemize}
\item[$(1)$]Since for each $i\in I$,  $M_i/M_ia\cong\text{Ann}_{M_i}(a)$ for every $a\in R$, it follows that
\begin{equation*}
\prod_{i\in I} M_i\Bigg /\left(\prod_{i\in I} M_i \right)a = \prod_{i\in I} M_i \Bigg/\prod_{i\in I} (M_ia) \cong \prod_{i\in I} \left(M_i/M_ia\right)
\cong \prod_{i\in I}\text{Ann}_{M_i}(a).
\end{equation*}
Hence
\begin{equation*}
\prod_{i\in I} M_i \Bigg/\left(\prod_{i\in I} M_i\right)a\cong \text{Ann}_{\prod_{i\in I} M_i}(a).
\end{equation*}

\item[$(2)$] Assume that for each $i\in I,M_i/M_ia\cong\text{Ann}_{M_i}(a)$ for every $a\in R$.  Then
\begin{equation*}
\bigoplus_{i\in I} M_i \Bigg/\left(\bigoplus_{i\in I} M_i\right)a=\bigoplus_{i\in I} M_i \Bigg/\bigoplus_{i\in I} (M_ia)\cong\bigoplus_{i\in I} (M_i/M_ia)\cong \bigoplus_{i\in I}\text{Ann}_{M_i}(a).
\end{equation*}
Thus
\begin{equation*}
\bigoplus_{i\in I} M_i \Bigg/\left(\bigoplus_{i\in I} M_i\right)a=\text{Ann}_{\bigoplus_{i\in I} M_i}(a).
\end{equation*}
\end{itemize}
\end{proof}

\begin{question}~\label{thn}
Is a direct summand of a weakly-morphic module weakly-morphic?
\end{question}
\noindent   Much as Proposition~\ref{p} is false for morphic modules, it was shown to hold if the condition $\text{Hom}(M_i ,M_j)=0, i,j\in I$ with $i\neq j$
in \cite[Lemma 25]{nicholson2005morphic} applies, and the converse   holds without any conditions (see \cite[Theorem 23]{nicholson2005morphic});  that is, direct  summands  of   morphic  modules  are  again  morphic. For a partial answer to Question~\ref{thn},  we have Proposition~\ref{g}.  First we make the following observation in Remark~\ref{ther}.

\begin{remark}~\label{ther}
For any family of endomorphisms $\{f_i:M_i\to M_i\}_{i\in I}$, there is an endomorphism $f:\bigoplus_{i\in I}M_i\to \bigoplus_{i\in I}M_i$
given by $f((m_i)_{i\in I})=(f_i(m_i))_{i\in I}$  with $m_i\in M_i$ (see \cite[pg. 68]{rotman2008introduction}).   In this case \cite{blyth2018module},
\begin{equation*}
\ker(f)=\bigoplus_{i\in I} \ker(f_i)~\text{and}~\text{Im}(f)=\bigoplus_{i\in I}\text{Im}(f_i).
\end{equation*}
Moreover, if  $\{g_i:M_i\to M_i\}_{i\in I}$ is any other family of endomorphisms $g=(g_i)_{i\in I}$,  then
\begin{equation*}
\bigoplus_{i\in I}M_i\stackrel{g}{\longrightarrow}\bigoplus_{i\in I}M_i\stackrel{f}{\longrightarrow}\bigoplus_{i\in I}M_i
\end{equation*}
is an exact sequence if and only if for every $i\in I$, the sequence below is  exact (\cite{blyth2018module})
\begin{equation*}
 M_i\stackrel{g_i}{\longrightarrow}M_i\stackrel{f_i}{\longrightarrow}M_i.
\end{equation*}
If $\varphi_a=(\varphi_{i_a})_{i\in I}$ is a right multiplication by $a$ endomorphism of $M=\bigoplus_{i\in I}M_i$
 defined by $\varphi_a(m)=ma$,  then   $\varphi_{i_a}(m_i)=m_ia$ for each $i\in I$.
\end{remark}
\begin{proposition}~\label{g}
If $\{M_i\}_{i\in I}$ is a family of $R$-modules satisfying  $\text{Hom}(M_i ,M_j)=0$ for every $i\neq j$ and  $M=\bigoplus_{i\in I} M_i$,  then $M$ is weakly-morphic if and only if for every $i\in I$  all the modules $M_i$ are weakly-morphic.
\end{proposition}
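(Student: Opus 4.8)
The plan is to prove the two implications separately, observing that one of them is essentially free. For the \emph{if} direction — if every $M_i$ is weakly-morphic, then so is $M$ — I would simply invoke Proposition~\ref{p}(2), which already establishes that an arbitrary direct sum of weakly-morphic modules is weakly-morphic. Note that the hypothesis $\operatorname{Hom}(M_i,M_j)=0$ plays no role here and is not needed for this direction.

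The substance of the statement is the \emph{only if} direction, and the hypothesis $\operatorname{Hom}(M_i,M_j)=0$ for $i\neq j$ is exactly what makes it work. First I would record its structural consequence: every endomorphism $\psi$ of $M=\bigoplus_{i\in I}M_i$ is \emph{diagonal}, i.e. $\psi=(\psi_i)_{i\in I}$ with $\psi_i=\pi_i\psi\iota_i\in\operatorname{End}(M_i)$ and $\psi\iota_i=\iota_i\psi_i$. Indeed, a homomorphism $M_i\to\bigoplus_{j}M_j$, post-composed with the projection onto $M_j$, is a map $M_i\to M_j$, hence zero for $j\neq i$; thus $\operatorname{Hom}(M_i,\bigoplus_j M_j)=\operatorname{End}(M_i)$, and combining this with the canonical identification $\operatorname{Hom}(\bigoplus_i M_i,-)\cong\prod_i\operatorname{Hom}(M_i,-)$ yields $\operatorname{End}(M)\cong\prod_{i}\operatorname{End}(M_i)$.

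Next, I would fix $a\in R$. Since $M$ is weakly-morphic it is $a$-morphic, so by Lemma~\ref{x}(3) there exists $\psi\in\operatorname{End}(M)$ for which the two-sided sequence with alternating arrows $\varphi_a$ and $\psi$ is exact. Writing $\psi=(\psi_i)_{i\in I}$ as above, and recalling from Remark~\ref{ther} that $\varphi_a=(\varphi_{i_a})_{i\in I}$ with $\varphi_{i_a}$ the right multiplication by $a$ on $M_i$, the exactness criterion of Remark~\ref{ther} says precisely that the componentwise three-term sequences $M_i\stackrel{\psi_i}{\longrightarrow}M_i\stackrel{\varphi_{i_a}}{\longrightarrow}M_i$ and $M_i\stackrel{\varphi_{i_a}}{\longrightarrow}M_i\stackrel{\psi_i}{\longrightarrow}M_i$ are exact for every $i\in I$. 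Hence, for each $i$, the alternating two-sided sequence built from $\varphi_{i_a}$ and $\psi_i$ is exact, so $M_i$ is $a$-morphic by Lemma~\ref{x}(3). As $a\in R$ was arbitrary, every $M_i$ is weakly-morphic, completing the proof.

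I expect the only genuinely delicate point to be the first step of the second paragraph — verifying that the hypothesis $\operatorname{Hom}(M_i,M_j)=0$ forces every endomorphism of $M$ to respect the decomposition. Once the diagonal form $\psi=(\psi_i)_{i\in I}$ is secured, the remainder is a mechanical transfer of the exact-sequence characterisation of Lemma~\ref{x}(3) through Remark~\ref{ther}. Without the diagonal form, a witness $\psi$ for the $a$-morphicity of $M$ could entangle distinct summands and would not restrict to witnesses on the individual $M_i$; this is precisely why the $\operatorname{Hom}$ condition cannot be dropped in general, and it aligns with the failure of the analogous statement for morphic modules recalled before the proposition.
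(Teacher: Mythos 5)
Your proposal is correct and follows essentially the same route as the paper's own proof: Proposition~\ref{p} disposes of the easy direction, and the converse is obtained from the identification $\operatorname{End}(M)\cong\prod_{i\in I}\operatorname{End}(M_i)$ (forced by $\operatorname{Hom}(M_i,M_j)=0$), the componentwise exactness transfer of Remark~\ref{ther}, and the exact-sequence characterisation in Lemma~\ref{x}(3). The only difference is cosmetic: you verify the diagonal form of endomorphisms directly, whereas the paper cites a reference for that decomposition.
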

\begin{proof}~
\begin{itemize}
\item[($\Rightarrow$):] Assume that $M=\displaystyle{\bigoplus_{i\in I} M_i}$ is a weakly-morphic module.     Since $\text{Hom}(M_i,M_j)=0$ for every $i,j\in I$ with $i\neq j$,    $\text{End}(M) \cong \prod_{i\in I}\text{End}(M_i)$ by \cite[Lemma 1.7]{anderson2021endoregular}.   For any $\psi=(\psi_i)_{i\in I}\in\text{End}(M)$ and $m=(m_i)_{i\in I}\in M$ with $m_i\in M_i,i\in I$, we obtain   $\psi(m)=\psi((m_i)_{i\in I})=(\psi_i(m_i))_{i\in I}\in
\bigoplus_{i\in I}\psi_i(M_i)$, where this sum is well defined because almost all $m_i$ vanish.     It follows that  $\text{Im}(\psi)=\bigoplus_{i\in I}\text{Im}(\psi_i)$ and $\ker(\psi)=\bigoplus_{i\in I}\ker(\psi_i)$.
Now consider any $\varphi_a=(\varphi_{i_a})_{i\in I}\in \text{End}(M)$  such that
  $\bigoplus_{i\in I}\varphi_{i_a}(M_i)= \varphi_a(M)=ker(\psi)=\bigoplus_{i\in I}\ker(\psi_i)$
   and $\bigoplus_{i\in I}\psi_i(M_i)= \psi(M)=ker(\varphi_a)=\bigoplus_{i\in I}\ker(\varphi_{i_a})$  for some $\psi=(\psi_i)_{i\in I}\in \text{End}(M)$.   By the preceding discussion in Remark~\ref{ther},  the lower row of the commutative diagram
 $$\begin{CD}
{\cdots}@>\psi_{i}>>M_i @>\varphi_{i_a}>> M_i @>\psi_{i}>> M_i@>\varphi_{i_a}>> M_i@>\psi_{i}>> {\cdots}\\
@V~VV@V\iota_1VV @V\iota_2VV @V\iota_3VV@V\iota_4VV@V~VV \\
{\cdots}@>\psi >>\displaystyle{\bigoplus_{i\in I} M_i} @> \varphi_a>>\displaystyle{\bigoplus_{i\in I} M_i} @>\psi >> \displaystyle{\bigoplus_{i\in I} M_i} @> \varphi_a>>\displaystyle{\bigoplus_{i\in I} M_i}@>\psi >>{\cdots}
\end{CD}
$$
with canonical injective maps $\iota_1,\iota_2,\iota_3,\iota_4$ is exact if and only if the upper row is exact.  Since  $\varphi_{i_a}(m_i)=m_ia$ for each $i\in I$, each $M_i$ is a weakly-morphic module by  Lemma~\ref{x}.
\item[($\Leftarrow$):] The converse of the proposition follows from Proposition~\ref{p}.
\end{itemize}
\end{proof}

\begin{Ex}~\label{gtg}
 Weakly-morphic modules are not closed under quotients.  Using the $\Bbb Z$-module  $\mathbb{Q}$ and its submodule $\mathbb{Z}$,  the quotient  $\Bbb Q/\Bbb Z=\bigoplus_{p\in \mathcal{P}}\Bbb Z(p^\infty)$ satisfies the property $\text{Hom}_{\Bbb Z}(\Bbb Z(p^\infty),\Bbb Z(q^\infty))=0,p\neq q$ for any  $p,q\in \mathcal{P}$ where $\mathcal{P} \subseteq\Bbb Z$ is a  collection of primes.
  Since the $(p)$-primary component  $\Bbb Z(p^\infty)$ of $\Bbb Q/\Bbb Z$ is  not weakly-morphic as a $\Bbb Z$-module by Example~\ref{deted},
  $\displaystyle{\Bbb Q/\Bbb Z}$  is not   weakly-morphic as $\Bbb Z$-module by Proposition~\ref{g}.
\end{Ex}

\begin{corollary}
Let $R$ be a  ring and  $\{M_i\}_{i\in I}$ be a family of $R$-modules.   If   $M=\bigoplus_{i\in I} M_i$ is a  weakly-morphic module with a reduced ring of endomorphisms, then  for every $i\in I$,  all the modules $M_i$ are weakly-morphic.
\end{corollary}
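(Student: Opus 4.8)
The plan is to reduce everything to Proposition~\ref{g}. That proposition already gives the desired conclusion provided one knows $\text{Hom}(M_i,M_j)=0$ for all $i\neq j$, so the entire content of this corollary is to show that a \emph{reduced} endomorphism ring of $M=\bigoplus_{i\in I}M_i$ forces those off-diagonal Hom-groups to vanish. Once that is in hand, Proposition~\ref{g} applies verbatim and yields that each $M_i$ is weakly-morphic. Thus I would first isolate, and then prove, the claim: if $\text{End}(M)$ is reduced, then $\text{Hom}(M_i,M_j)=0$ whenever $i\neq j$.

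To establish this claim I would argue by contradiction. Suppose there exist indices $i\neq j$ and a nonzero homomorphism $f\colon M_i\to M_j$. Let $\pi_i\colon M\to M_i$ be the canonical projection and $\iota_j\colon M_j\to M$ the canonical injection, and form the endomorphism $\tilde f:=\iota_j\, f\, \pi_i\in\text{End}(M)$. This is a well-defined endomorphism of the direct sum, since each element of $M$ has finite support and $\tilde f$ lands in the single summand $M_j$. Because $f\neq 0$, there is some $m_i\in M_i$ with $f(m_i)\neq 0$, and then $\tilde f\big(\iota_i(m_i)\big)=\iota_j\big(f(m_i)\big)\neq 0$, so $\tilde f\neq 0$.

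The key computation is that $\tilde f$ is square-zero: since $i\neq j$, the composite $\pi_i\iota_j\colon M_j\to M_i$ is the zero map, whence $\tilde f^2=\iota_j\, f\,(\pi_i\iota_j)\, f\,\pi_i=0$. Thus $\tilde f$ is a nonzero nilpotent element of $\text{End}(M)$, contradicting the hypothesis that $\text{End}(M)$ is reduced. Hence $\text{Hom}(M_i,M_j)=0$ for all $i\neq j$, and applying Proposition~\ref{g} completes the proof. I expect the only delicate point to be this square-zero computation — in particular verifying simultaneously that the ``matrix unit'' $\tilde f$ is genuinely nonzero and yet squares to zero — while the passage to Proposition~\ref{g} is immediate.
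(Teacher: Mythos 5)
Your proposal is correct, and it follows the paper's overall strategy — reduce to Proposition~\ref{g} by showing that a reduced endomorphism ring forces $\operatorname{Hom}(M_i,M_j)=0$ for $i\neq j$ — but it proves that key claim by a different mechanism. The paper argues: in a reduced ring every idempotent is central, hence (citing \cite{ozcan2006duo}) every direct summand of $M$ is fully invariant, and full invariance of the summands kills the off-diagonal Hom-groups. You instead exhibit the obstruction directly: a nonzero $f\colon M_i\to M_j$ produces the endomorphism $\tilde f=\iota_j f\pi_i$, which is nonzero yet square-zero because $\pi_i\iota_j=0$ when $i\neq j$, contradicting reducedness. Both arguments are sound; yours is more elementary and self-contained, needing no external citation and no detour through idempotents, while the paper's route records the stronger structural fact that all direct summands of $M$ are fully invariant (a duo-type property), which is of independent interest and situates the corollary within the duo-module literature. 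It is worth noting that the two proofs are cousins: the standard proof that idempotents are central in a reduced ring is itself a square-zero computation, so your argument essentially inlines the nilpotency that the paper outsources to the cited results.
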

\begin{proof}
Since the  ring of endomorphisms $\text{End}_R(M)$ of $M=\bigoplus_{i\in I} M_i$  is reduced, every idempotent endomorphism of $M$ is central.   By \cite[pg. 536]{ozcan2006duo}, every direct
summand of $M$ is fully invariant.    Thus  $\text{Hom}(M_i ,M_j)=0$ for every $i\neq j$.   Applying Proposition~\ref{g}, each $M_i,i\in I$ is a weakly-morphic module.
\end{proof}

\noindent Since weakly-morphic and morphic are indistinguishable for rings, by \cite[Example 2]{nicholson2004rings}, a direct product $R:=\prod_{i\in I}R_i$ of rings $R_i$ is  (weakly-)morphic if and only if each $R_i$ is (weakly-)morphic.

\begin{proposition}~\label{gtg}
Suppose that $M_i$ is an $R_i$-module for each $i=1,\ldots,n$. Then $\prod_{i=1}^nM_i$ is a weakly-morphic $\prod_{i=1}^nR_i$-module if and only if $M_i$ is a weakly-morphic $R_i$-module for each $i=1,\ldots,n$.
\end{proposition}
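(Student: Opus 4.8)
The plan is to realise $M=\prod_{i=1}^n M_i$ as a module over $R:=\prod_{i=1}^n R_i$ with the componentwise action $(m_i)_i\cdot(a_i)_i=(m_ia_i)_i$, and then to deduce the statement from Propositions~\ref{g} and~\ref{51} rather than by a bare-hands computation. First I would regard each factor $M_i$ as an $R$-module through the $i$th projection $R\to R_i$; since the index set is finite, $M=\prod_{i=1}^n M_i=\bigoplus_{i=1}^n M_i$ as $R$-modules, so the setting of Proposition~\ref{g} becomes available once the orthogonality hypothesis is verified.

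The first key step is to check that $\text{Hom}_R(M_i,M_j)=0$ whenever $i\neq j$. Writing $e_j=(0,\dots,1,\dots,0)\in R$ for the idempotent with $1$ in the $j$th slot, the action via the projection gives $M_ie_j=0$ (since the $i$th component of $e_j$ is $0$) and $M_je_j=M_j$; hence for any $R$-linear $f\colon M_i\to M_j$ and $m\in M_i$ we get $f(m)=f(m)e_j=f(me_j)=f(0)=0$, so $f=0$. With this orthogonality in hand, Proposition~\ref{g} applies and yields that $M$ is weakly-morphic as an $R$-module if and only if each $M_i$ is weakly-morphic as an $R$-module.

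The second key step converts ``weakly-morphic over $R$'' into ``weakly-morphic over $R_i$'' factor by factor. Set $B_i:=\{a\in R:a_i=0\}=\prod_{j\neq i}R_j$, an ideal of $R$. Because $a_i=0$ forces $m\cdot a=ma_i=0$ for all $m\in M_i$, we have $B_i\subseteq\text{Ann}_R(M_i)$, while the projection induces a ring isomorphism $R/B_i\cong R_i$ under which the $R/B_i$-action on $M_i$ is precisely its $R_i$-action. Proposition~\ref{51} (the equivalence of its conditions (1) and (4)) then shows that $M_i$ is weakly-morphic over $R$ if and only if it is weakly-morphic over $R/B_i$, i.e.\ over $R_i$. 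Chaining the two equivalences completes the proof.

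The computations are all routine; the only points that require care are the bookkeeping that the $R$-action on the factor $M_i$ genuinely factors through $R_i$ (so that $B_i$ really sits inside $\text{Ann}_R(M_i)$ and $R/B_i$ really is $R_i$), and the observation that a finite product is a direct sum so that Proposition~\ref{g} is legitimately applicable. Alternatively, one could argue directly: for $a=(a_i)_i$ one has $Ma=\prod_i M_ia_i$ and $\text{Ann}_M(a)=\prod_i\text{Ann}_{M_i}(a_i)$, whence $M/Ma\cong\prod_i(M_i/M_ia_i)$, and since an $R$-module isomorphism between such products decomposes, via the idempotents $e_i$, into $R_i$-isomorphisms of the $i$th factors, $a$-morphicity of $M$ is equivalent to $a_i$-morphicity of every $M_i$; letting $a$ range over $R$ gives the claim. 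I expect the decomposition of homomorphisms along the central idempotents to be the only genuinely load-bearing fact in either route.
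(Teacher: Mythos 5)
Your proof is correct, but it takes a genuinely different route from the paper's. You reduce the statement to two earlier results: after observing that the finite product $\prod_{i=1}^n M_i$ is the direct sum $\bigoplus_{i=1}^n M_i$ of $R$-modules over $R=\prod_{i=1}^n R_i$ (each $M_i$ made an $R$-module through the $i$th projection), you verify $\text{Hom}_R(M_i,M_j)=0$ for $i\neq j$ via the central idempotents $e_j$, invoke Proposition~\ref{g} to pass between $M$ and its factors over $R$, and then use Proposition~\ref{51} with the ideal $B_i=\prod_{j\neq i}R_j\subseteq\text{Ann}_R(M_i)$ to trade ``weakly-morphic over $R$'' for ``weakly-morphic over $R_i$''. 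The paper instead argues by bare hands: for the forward direction (with $n=2$, then induction) it applies the weakly-morphic hypothesis to the single, cleverly chosen element $(a_1,1)$, whose multiplication map $\vartheta_{(a_1,1)}$ on $M_1\times M_2$ has image $M_1a_1\times M_2$ and kernel $\text{Ann}_{M_1}(a_1)\times 0$, so the isomorphism $(M_1\times M_2)/\vartheta_{(a_1,1)}(M_1\times M_2)\cong\ker(\vartheta_{(a_1,1)})$ already reads off as $M_1/M_1a_1\cong\text{Ann}_{M_1}(a_1)$; the converse is the componentwise computation that you also sketch as your ``alternative route''. Your approach buys modularity and conceptual clarity: the proposition becomes an instance of orthogonal decomposition plus change of rings, with the required Hom-vanishing automatic from the central idempotents. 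The paper's $(a_1,1)$ trick buys self-containedness: it never needs to decompose an isomorphism along idempotents or invoke $\text{End}(M)\cong\prod_i\text{End}(M_i)$, on which the proof of Proposition~\ref{g} relies. One pedantic point you should flag if you write this up: Proposition~\ref{51}(4) as stated quantifies over \emph{all} ideals $A\subseteq\text{Ann}_R(M)$, whereas you apply the equivalence to the single ideal $B_i$; this is harmless, since the paper's proof of (3)$\Leftrightarrow$(4) in fact establishes the equivalence for each fixed ideal $A$, but it deserves a sentence.
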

\begin{proof}~
\begin{itemize}
\item[($\Rightarrow$):] Assume that $M_1\times M_2$ is a weakly-morphic $(R_1\times R_2)$-module for the case $i=1,2$.  Take $a_1\in R_1$ and define $\vartheta_{a_1}:M_1\to M_1$ by $\vartheta_{a_1}(m_1)=m_1a_1$ for every $m_1\in M_1$.   Then $\vartheta_{a_1}$ is a right multiplication by $a_1$ endomorphism of $M_1$.
  Further, defining a map   $\vartheta_{(a_1,1)}:M_1\times M_2\to M_1\times M_2$ by $\vartheta_{(a_1,1)}((m_1,m_2))=(m_1a_1,m_2)=(m_1,m_2)(a_1,1)$ for every $(m_1,m_2)\in M_1\times M_2$ gives a right multiplication by $(a_1,1)$ endomorphism of $M_1\times M_2$.   Then  $\vartheta_{(a_1,1)}(M_1\times M_2)=M_1a_1\times M_2$ and $\ker(\vartheta_{(a_1,1)})\cong\text{Ann}_{M_1}(a_1)$.   Since  $M_1\times M_2$ is weakly-morphic,
\begin{eqnarray*}
  M_1/M_1a_1 &\cong& \frac{M_1}{M_1a_1} \times \frac{M_2}{M_2} \cong \frac{M_1\times M_2}{M_1a_1\times M_2}\\
  &=& \frac{M_1\times M_2}{\vartheta_{(a_1,1)}(M_1\times M_2)}  \cong  \ker(\vartheta_{(a_1,1)})\\
  &\cong&\text{Ann}_{M_1}(a_1).
\end{eqnarray*}
This proves that   $M_1$ is a weakly-morphic  $R_1$-module.   Similarly, $M_2/M_2a_2\cong \text{Ann}_{M_2}(a_2)$ for each $a_2\in R_2$ so that $M_2$ is a weakly-morphic $R_2$-module.   Consequently,  by induction on $n, M_i$ is a weakly-morphic $R_i$-module for each $i=1,\ldots,n$.
\item[]$(\Leftarrow)$:  Assume that for each $i=1,\ldots,n, M_i$ is a weakly-morphic $R_i$-module.     Then  $M_i/M_ia_i\cong \text{Ann}_{M_i}(a_i)$ for each $a_i\in R_i$ with $i=1,\ldots,n$.
        Now put $a=(a_1,\ldots,a_n)\in \prod_{i=1}^nR_i$.
        Then we have
\begin{eqnarray*}
\prod_{i=1}^nM_i \Bigg/\left(\prod_{i=1}^nM_i\right)(a_1,\ldots,a_n)&=&\prod_{i=1}^nM_i\Bigg/\prod_{i=1}^n(M_ia_i)\\
&\cong& \prod_{i=1}^n(M_i/ M_ia_i)\\
&\cong&  \prod_{i=1}^n\text{Ann}_{M_i}(a_i).
\end{eqnarray*}

     This implies that  $\displaystyle{\prod_{i=1}^nM_i \Bigg/\left(\prod_{i=1}^nM_i\right)a\cong \text{Ann}_{\prod_{i=1}^nM_i}(a)}$   which completes the proof.
\end{itemize}
\end{proof}

\section{Modules with regular endomorphisms}

\noindent  We say  $a\in R$ is  {\it regular} (resp., {\it strongly regular}) if there exists some $x\in R$ such that $a = axa$ (resp., $a = a^2x$).
   A ring is regular (resp., strongly regular) if every $a\in R$ is regular (resp., strongly regular). If $a\in R$ is strongly regular, then $a = ue$ where $u$ is a unit and $e$ is an
idempotent.
It is well known by \cite[Lemma 3.1]{ware1971endomorphism} that $\varphi\in S$  is regular  if  and  only  if  both $\varphi(M)$  and  $\ker(\varphi)$  are  direct summands  of  $M$.

\begin{proposition}\label{e5e}
Let $R$ be a ring, $M$ an $R$-module and consider the following statements:
\begin{enumerate}
\item[\emph{(1)}] $\varphi_a\in S$  is  regular for every $a\in R$;
\item[\emph{(2)}] $M=Ma\bigoplus \text{Ann}_M(a)$ for every $a\in R$;
\item[\emph{(3)}] $Ma$ and $\text{Ann}_M(a)$ are direct summands of $M$ for every $a\in R$;
\item[\emph{(4)}] $M$ is a weakly-morphic module.
  \end{enumerate}
Then we always have the following implications: $(1)\Leftrightarrow(2)\Leftrightarrow(3)\Rightarrow(4)$.
\end{proposition}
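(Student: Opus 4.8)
The plan is to prove the cycle by separating the one substantive step from several formal ones. The equivalence $(1)\Leftrightarrow(3)$ is immediate from the cited theorem of Ware: $\varphi_a$ is regular if and only if its image $\varphi_a(M)=Ma$ and its kernel $\ker(\varphi_a)=\text{Ann}_M(a)$ are both direct summands of $M$; reading this for every $a\in R$ is exactly $(1)\Leftrightarrow(3)$. The implication $(2)\Rightarrow(3)$ is trivial, since an internal decomposition $M=Ma\oplus\text{Ann}_M(a)$ displays each of the two submodules as a summand. Thus the whole cycle closes once I establish $(1)\Rightarrow(2)$, and the remaining arrow $(3)\Rightarrow(4)$ is then easy.

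For $(1)\Rightarrow(2)$, suppose $\varphi_a$ is regular and choose $\psi\in S$ with $\varphi_a\psi\varphi_a=\varphi_a$. The decisive point is that commutativity of $R$ forces $\varphi_a$ to be central in $S$: for any $R$-linear $\psi$ and any $m\in M$, $\psi(\varphi_a(m))=\psi(ma)=\psi(m)a=\varphi_a(\psi(m))$, so $\psi\varphi_a=\varphi_a\psi$. Consequently the two idempotents that a generalized inverse normally produces, namely $\varphi_a\psi$ and $\psi\varphi_a$, coincide; write $e$ for this common idempotent. From $\varphi_a\psi\varphi_a=\varphi_a$ I would read off $e\varphi_a=\varphi_a=\varphi_a e$, and then deduce $\text{Im}(e)=Ma$ (using $e=\varphi_a\psi$ for $\subseteq$ and $e\varphi_a=\varphi_a$ for $\supseteq$) together with $\ker(e)=\text{Ann}_M(a)$ (using $e=\psi\varphi_a$ for $\supseteq$ and $\varphi_a e=\varphi_a$ for $\subseteq$). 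Since $e$ is idempotent, $M=\text{Im}(e)\oplus\ker(e)$, which is precisely $M=Ma\oplus\text{Ann}_M(a)$, i.e. $(2)$.

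The last arrow $(3)\Rightarrow(4)$ is then immediate through $(2)$: the decomposition $M=Ma\oplus\text{Ann}_M(a)$ gives $M/Ma\cong\text{Ann}_M(a)$ as $R$-modules, so $M$ is $a$-morphic, and since $a$ was arbitrary $M$ is weakly-morphic.

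The genuine obstacle is exactly $(1)\Rightarrow(2)$. Regularity alone only makes the image and the kernel direct summands, not \emph{complementary} ones: a nilpotent regular endomorphism of a vector space (a single Jordan block) can have nonzero image equal to its kernel, so in general $\varphi_a(M)\cap\ker(\varphi_a)\neq 0$. What saves the statement is precisely the commutativity hypothesis, which collapses the two idempotents $\varphi_a\psi$ and $\psi\varphi_a$ into a single $e$ whose image and kernel are automatically complementary; this is the unique place where $R$ being commutative is used. I would also remark that no converse to $(3)\Rightarrow(4)$ is claimed, and indeed $M=\Bbb Z_2\oplus\Bbb Z_4$ of Example~\ref{gal} is weakly-morphic while $\varphi_2$ is not regular (its image is not a direct summand), so $(4)$ is strictly weaker than $(1)$--$(3)$.
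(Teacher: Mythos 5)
Your proof is correct, and its logical skeleton matches the paper's: both reduce everything to the single substantive arrow $(1)\Rightarrow(2)$, get $(1)\Leftrightarrow(3)$ from Ware's lemma, and treat $(2)\Rightarrow(3)$ and the passage to $(4)$ as formal. The difference lies in how $(1)\Rightarrow(2)$ is executed. The paper first upgrades regularity to strong regularity (using centrality of $\varphi_a$ to write $\varphi_a=\varphi_a\psi\varphi_a=\varphi_a^2\psi$) and then invokes the factorization of a strongly regular element as $\varphi_a=e\beta$ with $e$ a central idempotent and $\beta$ an automorphism, reading off $\varphi_a(M)=e(M)$ and $\ker(\varphi_a)=(1_M-e)(M)$. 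You instead stay with the generalized inverse itself: centrality collapses $\varphi_a\psi$ and $\psi\varphi_a$ into one idempotent $e$, and you verify $\text{Im}(e)=Ma$ and $\ker(e)=\text{Ann}_M(a)$ by direct computation from the single identity $\varphi_a\psi\varphi_a=\varphi_a$. Your route is the more self-contained of the two: the paper's $e\beta$-factorization is only asserted (it is stated before the proposition for elements of the commutative ring $R$, yet applied inside $S$, which need not be commutative), whereas every step of yours is checked from first principles, so your argument actually closes a small gap the paper leaves to the reader. Both proofs turn on the point you isolate explicitly: commutativity of $R$ makes $\varphi_a$ central in $S$, and that is what forces image and kernel to be complementary rather than merely summands (your Jordan-block remark correctly explains why mere regularity would not suffice). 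Your closing observation that $(4)\not\Rightarrow(3)$ also agrees with the paper, which makes the same point right after the proposition using $M=\Bbb Z_4$ and $a=\overline{2}$ in place of your $\Bbb Z_2\bigoplus\Bbb Z_4$; both counterexamples are valid.
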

\begin{proof}~
\begin{itemize}
  \item [(1)$\Rightarrow$(2)]
Since $\varphi_a$ is a commutative element of $S$ and regular, there exists some $\psi\in S$ such that $\varphi_a=\varphi_a\psi\varphi_a=\varphi_a^2\psi$.  So $\varphi_a$ is strongly regular, that is,  $\varphi_a=e\beta$ where $e$ is a central idempotent endomorphism of $M$ and $\beta$ is some automorphism of $M$.   Then $\varphi_a(M) = e(M)$ and $\ker(\varphi_a) = \ker(e) =(1_M-e)(M)$.   Hence $M = \varphi_a(M)\bigoplus\ker(\varphi_a)$.  Now (2) follows.
\item [(2)$\Rightarrow$(3)] and $(2)\Rightarrow(4)$ are clear.  $(3)\Leftrightarrow(1)$ follows from \cite[Lemma 3.1]{ware1971endomorphism}.
\end{itemize}
\end{proof}
\noindent
$(4)\Rightarrow(3)$ in Proposition~\ref{e5e} fails in general.  The $\Bbb Z$-module $M:=\Bbb Z_4$  is weakly-morphic but for $a:=\overline{2}$, $Ma\bigoplus\text{Ann}_M(a)=\overline{2}\Bbb Z_4\bigoplus\overline{2}\Bbb Z_4\cong \Bbb Z_2\bigoplus\Bbb Z_2\neq M$.

\begin{corollary}~\label{ftft}
Every module over a regular ring is weakly-morphic.
\end{corollary}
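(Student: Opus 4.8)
The plan is to reduce the statement entirely to condition~(1) of Proposition~\ref{e5e}, namely that $\varphi_a\in S$ is regular for every $a\in R$. Once this is in hand, the implication $(1)\Rightarrow(4)$ of that proposition immediately yields that $M$ is weakly-morphic, and there is nothing further to do.

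So I would let $R$ be a regular ring, let $M$ be an arbitrary $R$-module, and fix $a\in R$. By the definition of regularity there exists $x\in R$ with $a=axa$. I then propose $\psi:=\varphi_x\in S$ as the witness and verify directly that $\varphi_a\psi\varphi_a=\varphi_a$. Applying the composite to an arbitrary $m\in M$ and using commutativity of $R$, one has $(\varphi_a\varphi_x\varphi_a)(m)=maxa=m(axa)=ma=\varphi_a(m)$, so that $\varphi_a\varphi_x\varphi_a=\varphi_a$ and $\varphi_a$ is a regular element of $S$. Since $a$ was arbitrary, condition~(1) of Proposition~\ref{e5e} holds, and the conclusion follows.

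Conceptually, the mechanism is simply that the assignment $a\mapsto\varphi_a$ is a ring homomorphism $R\to S$ with image $S_M\cong R/\text{Ann}_R(M)$, so it transports the regularity identity $a=axa$ in $R$ to the endomorphism identity $\varphi_a=\varphi_a\varphi_x\varphi_a$ in $S$; equivalently, $S_M$ is a homomorphic image of the regular ring $R$ and hence regular, and a regularity equation holding in the subring $S_M$ of $S$ persists in $S$. I do not expect any genuine obstacle here: the only thing requiring verification is that the composite of two right-multiplication endomorphisms is again right multiplication by the product, and commutativity of $R$ makes the order of the three factors immaterial. Having confirmed (1), a single appeal to Proposition~\ref{e5e} completes the argument.
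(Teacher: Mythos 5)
Your proof is correct and takes essentially the same route as the paper: both reduce the statement to condition (1) of Proposition~\ref{e5e} by transporting the identity $a=axa$ in $R$ to $\varphi_a=\varphi_a\varphi_x\varphi_a$ in $S$, the paper phrasing this as ``$R/\text{Ann}_R(M)\cong S_M$ is a quotient of a regular ring, hence regular'' while you verify the same fact elementwise. Your final remark even restates the paper's one-line argument verbatim, so the two proofs differ only in presentation.
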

\begin{proof}
 If $M$ is a module over a regular ring $R$, then $R/\text{Ann}_R(M)$ is regular.  Since $R/\text{Ann}_R(M)\cong S_M$, $M$ is weakly-morphic  by Proposition~\ref{e5e}.
\end{proof}

\noindent A module $M$ is  {\it kernel-direct} (resp., {\it image-direct}) if   $\ker(\varphi)\subseteq^{\bigoplus}M$ (resp., $\varphi(M)\subseteq^{\bigoplus}M$) for every
$\varphi\in S$  \cite{nicholson2005morphic}.    Kernel-direct  (resp., image-direct) modules are also called Rickart (resp., Dual-Rickart) modules (see  \cite{lee2010rickart} and \cite{lee2011rickart}).

\begin{theorem}\label{datadata} If an $R$-module $M$ is either kernel-direct or image-direct, then all the four statements in Proposition~\ref{e5e} are equivalent.
\end{theorem}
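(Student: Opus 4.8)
The plan is to observe that Proposition~\ref{e5e} already establishes the chain $(1)\Leftrightarrow(2)\Leftrightarrow(3)\Rightarrow(4)$ for an arbitrary module, so the only thing left is to close the loop by proving $(4)\Rightarrow(3)$ under the extra hypothesis that $M$ is kernel-direct or image-direct. Since statement (3) asks precisely that both $Ma$ and $\text{Ann}_M(a)$ be direct summands of $M$ for every $a$, the whole proof reduces to exhibiting these two submodules as a kernel and as an image of suitable endomorphisms, so that the relevant one-sided directness hypothesis can be applied to each.

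The key tool will be Lemma~\ref{x}. Assuming (4), that is, that $M$ is weakly-morphic, fix $a\in R$; then $M$ is $a$-morphic, and by the equivalence (1)$\Leftrightarrow$(2) of Lemma~\ref{x} there is some $\psi\in S$ with $Ma=\ker(\psi)$ and $\text{Ann}_M(a)=\psi(M)$. The point is that this rewrites the image $Ma=\varphi_a(M)$ as a kernel and the kernel $\text{Ann}_M(a)=\ker(\varphi_a)$ as an image; together with the tautological descriptions $Ma=\varphi_a(M)$ and $\text{Ann}_M(a)=\ker(\varphi_a)$, each of the two submodules is simultaneously available both as a kernel and as an image of an element of $S$.

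Now I would split into the two cases. If $M$ is kernel-direct, then the kernel of every endomorphism is a direct summand; applying this to $\varphi_a$ gives that $\text{Ann}_M(a)=\ker(\varphi_a)$ is a direct summand, and applying it to the $\psi$ supplied by Lemma~\ref{x} gives that $Ma=\ker(\psi)$ is a direct summand. If instead $M$ is image-direct, then every image is a direct summand; applying this to $\varphi_a$ yields $Ma=\varphi_a(M)\subseteq^{\bigoplus}M$, and applying it to $\psi$ yields $\text{Ann}_M(a)=\psi(M)\subseteq^{\bigoplus}M$. In either case both $Ma$ and $\text{Ann}_M(a)$ are direct summands for every $a$, which is exactly (3). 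Combined with Proposition~\ref{e5e}, all four statements are then equivalent.

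I do not expect a serious obstacle here; the content is entirely in spotting that Lemma~\ref{x} lets one trade a kernel for an image and vice versa, which is what makes a single one-sided directness hypothesis strong enough to control both $Ma$ and $\text{Ann}_M(a)$. The only point requiring care is that the auxiliary $\psi$ depends on $a$, so the argument must be run for each fixed $a$ separately; but since (3) is quantified over all $a$, this causes no difficulty.
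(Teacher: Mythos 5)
Your proposal is correct and follows essentially the same route as the paper: both reduce the theorem to closing the cycle in Proposition~\ref{e5e} (the paper targets $(4)\Rightarrow(2)$, you target $(4)\Rightarrow(3)$, but these pass through the identical intermediate step), and both use Lemma~\ref{x} in exactly the same way to re-express $Ma$ as a kernel and $\text{Ann}_M(a)$ as an image so that a single one-sided directness hypothesis suffices in each case. No gaps; your remark about $\psi$ depending on $a$ is a fair point of care and is handled correctly.
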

\begin{proof} It is enough to prove that $(4)\Rightarrow(2)$  in each case.   The rest of the implications have been proved in Proposition~\ref{e5e}.
\begin{itemize}
\item[Case 1:] Assume that  $M$ is kernel-direct and weakly-morphic.   Then $\text{Ann}_M(a)=\ker(\varphi_a)\subseteq^{\bigoplus}M$ for each $a\in R$.  Further, by Lemma~\ref{x}, for each $a\in R$ there exists $\psi\in S$ such that $Ma=\ker(\psi) \subseteq^{\bigoplus}M$.    It follows from Proposition~\ref{e5e} that $M=Ma\bigoplus \text{Ann}_M(a)$.
\item [Case 2:] Assume that  $M$ is image-direct and weakly-morphic.   Then for each  $a\in R,Ma= \varphi_a(M) \subseteq^{\bigoplus}M$.   By Lemma~\ref{x},  $\psi\in S$ exists such that  $\text{Ann}_M(a)=\ker(\varphi_a)=\psi(M)\subseteq^{\bigoplus}M$.   Hence $M=Ma\bigoplus \text{Ann}_M(a)$ by Proposition~\ref{e5e}.
\end{itemize}
\end{proof}

\begin{corollary}~\label{}
A kernel-direct or image-direct $R$-module is weakly-morphic if and only if each element of  $R/\text{Ann}_R(M)$ is regular as an element of $S$, that is, for each $a\in R$ there exists  $\varphi\in S$  such that $ma= \varphi(m)a^2$ for each $m\in M$.
\end{corollary}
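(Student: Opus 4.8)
The plan is to read this corollary off Theorem~\ref{datadata} once the phrase ``regular as an element of $S$'' is correctly identified with statement~(1) of Proposition~\ref{e5e}. Under the isomorphism $S_M\cong R/\text{Ann}_R(M)$, the class $a+\text{Ann}_R(M)$ is sent to $\varphi_a\in S$, so the hypothesis that every element of $R/\text{Ann}_R(M)$ is regular as an element of $S$ is precisely the statement that $\varphi_a$ is a regular element of $S$ for every $a\in R$, which is statement~(1). Since $M$ is assumed kernel-direct or image-direct, Theorem~\ref{datadata} gives the equivalence $(1)\Leftrightarrow(4)$, and $(4)$ is exactly ``$M$ is weakly-morphic''. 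This delivers the biconditional at once, leaving only the concrete reformulation in the ``that is'' clause to be justified.

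The one point that needs care is showing that $\varphi_a$ being regular in $S$ is the same as the existence of $\varphi\in S$ with $ma=\varphi(m)a^2$ for all $m\in M$. The key observation is that $\varphi_a$ is central in $S$: for any $\psi\in S$ and $m\in M$, the $R$-linearity of $\psi$ gives $\psi(ma)=\psi(m)a=\varphi_a(\psi(m))$, so $\psi\varphi_a=\varphi_a\psi$. First I would use this to upgrade regularity to the strongly-regular form. If $\varphi_a=\varphi_a\psi\varphi_a$ for some $\psi\in S$, then centrality rewrites this as $\varphi_a=\varphi_a^2\psi$; evaluating at $m$ yields $ma=\psi(m)a^2$, so $\varphi:=\psi$ satisfies the displayed identity. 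Conversely, if $ma=\varphi(m)a^2$ for all $m$, then $\varphi_a=\varphi_a^2\varphi$, and centrality once more gives $\varphi_a=\varphi_a\varphi\varphi_a$, so $\varphi_a$ is regular. This closes the loop between the abstract regularity condition and the explicit equation.

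The main (and essentially the only) obstacle is recognising that the commutativity of $R$ forces $\varphi_a$ to be central in $S$, which is what collapses ``regular'' into the strongly-regular shape $\varphi_a=\varphi_a^2\varphi$ recorded in the statement; this is exactly the mechanism already exploited in the proof of $(1)\Rightarrow(2)$ of Proposition~\ref{e5e}. Everything else is a direct appeal to Theorem~\ref{datadata}. I would therefore present the centrality computation explicitly, deduce the two-way passage between $\varphi_a=\varphi_a\psi\varphi_a$ and $ma=\varphi(m)a^2$, and then simply invoke the equivalence $(1)\Leftrightarrow(4)$ of the theorem to conclude.
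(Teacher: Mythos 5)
Your proposal is correct and takes essentially the same route as the paper: the published proof is precisely the observation that $S_M\cong R/\text{Ann}_R(M)$ combined with Proposition~\ref{e5e} and Theorem~\ref{datadata}, i.e.\ the equivalence $(1)\Leftrightarrow(4)$ for kernel-direct or image-direct modules. Your explicit centrality computation converting $\varphi_a=\varphi_a\psi\varphi_a$ into $ma=\psi(m)a^2$ and back simply spells out what the paper leaves implicit, and it is the same mechanism already used in the proof of $(1)\Rightarrow(2)$ of Proposition~\ref{e5e}.
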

\begin{proof}
Since $S_M\cong R/\text{Ann}_R(M)$,  Proposition~\ref{e5e} and Theorem~\ref{datadata} complete the proof of the corollary.
\end{proof}

\noindent An $R$-module $M$ is {\it endoregular}  if $S$ is a regular ring, equivalently, if $\varphi(M)\subseteq^{\bigoplus} M$ and $ker(\varphi)\subseteq^{\bigoplus}M$ for every $\varphi\in\text{End}_R(M)$  \cite{lee2013modules} and \cite[Corollary 3.2]{ware1971endomorphism}.
\begin{corollary}~\label{yayeri}
Every endoregular module is a weakly-morphic module.
\end{corollary}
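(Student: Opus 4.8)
The plan is to deduce this directly from Proposition~\ref{e5e}, the crucial observation being that the multiplication endomorphisms $\varphi_a$ are themselves particular elements of $S=\text{End}_R(M)$. By definition, $M$ being endoregular means that $S$ is a regular ring. First I would note that, since each $\varphi_a$ lies in $S$ and every element of a regular ring is a regular element, $\varphi_a$ is regular in $S$ for every $a\in R$. This is precisely statement~(1) of Proposition~\ref{e5e}, so the implication $(1)\Rightarrow(4)$ established there immediately yields that $M$ is weakly-morphic.

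Alternatively, one could route through Theorem~\ref{datadata}. The equivalent formulation of endoregularity asserts that $\varphi(M)\subseteq^{\bigoplus}M$ and $\ker(\varphi)\subseteq^{\bigoplus}M$ for every $\varphi\in S$, so an endoregular module is simultaneously kernel-direct and image-direct. Specializing this to $\varphi=\varphi_a$ gives $Ma=\varphi_a(M)\subseteq^{\bigoplus}M$ and $\text{Ann}_M(a)=\ker(\varphi_a)\subseteq^{\bigoplus}M$ for each $a\in R$, which is exactly statement~(3) of Proposition~\ref{e5e}; since the module is kernel-direct (and also image-direct), Theorem~\ref{datadata} renders all four statements equivalent and weakly-morphic again follows.

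There is essentially no obstacle here: the entire content is the recognition that the regularity hypothesis on the whole of $S$ restricts, in particular, to the subring $S_M\cong R/\text{Ann}_R(M)$ of multiplication maps, which is precisely what statements~(1) and~(3) of Proposition~\ref{e5e} require. Thus the corollary is a one-line consequence once the appropriate preceding result is invoked. I would therefore present only the first (shorter) route, and at most mention the second as an optional remark to emphasise the link with Theorem~\ref{datadata}.
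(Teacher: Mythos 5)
Your proposal is correct, and both of your routes work; the paper's own proof is essentially your \emph{second} route, not your first. The paper argues: an endoregular module is both kernel-direct and image-direct, hence $Ma$ and $\mathrm{Ann}_M(a)$ are direct summands of $M$ for every $a\in R$, which is statement~(3) of Proposition~\ref{e5e}, and (3)$\Rightarrow$(4) gives weakly-morphic. Your preferred first route instead uses the definition of endoregularity directly: $S$ is a regular ring, so each $\varphi_a$ is a regular element of $S$, which is statement~(1), and (1)$\Rightarrow$(4) concludes. Since Proposition~\ref{e5e} already establishes (1)$\Leftrightarrow$(3), the difference is cosmetic; if anything, your route is marginally more economical because it needs only the definition of endoregular, whereas the paper invokes the equivalent summand characterisation (the citation to Ware / Lee--Rizvi--Roman). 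Two small remarks. First, in your second route the appeal to Theorem~\ref{datadata} is unnecessary: Proposition~\ref{e5e} gives (3)$\Rightarrow$(4) unconditionally, so you do not need the kernel-direct hypothesis to upgrade anything. Second, be careful with the closing phrase that regularity of $S$ ``restricts to the subring $S_M$'': regularity of a ring does not in general pass to subrings (the quasi-inverse $\psi$ need not lie in $S_M$), and statement~(1) only asks that each $\varphi_a$ be regular \emph{as an element of $S$}, which is exactly what you actually use; keep the wording in that form.
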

\begin{proof}
An endoregular module $M$ is both kernel-direct and image-direct.   Therefore,  $Ma$ and $\text{Ann}_M(a)$ are direct summands of $M$.    Hence $M$ is weakly-morphic by Proposition~\ref{e5e}.
\end{proof}

\noindent  A module $M$ is {\it Dedekind-finite} if  it is not isomorphic to any proper direct summand of itself, \cite[pg. 185]{Calugareanu2010abelian}.  Every morphic module is Dedekind-finite module (see~\cite{nicholson2005morphic}).\\

\begin{Ex}
 An endoregular module need not be morphic.   The  $\Bbb Z$-module $\bigoplus_{i=1}^\infty\Bbb Q_i$, where $\Bbb Q_i=\Bbb Q$,   is endoregular but not morphic since it is not Dedekind-finite.   Also the  endoregular module $\bigoplus_{i=1}^\infty\Bbb R_i$,  where $\Bbb R_i=\Bbb R$, is not morphic by \cite[Corollary to Theorem 1]{ehrlich1976units}.
Moreover, the  $R$-module  $\bigoplus_{n=1}^\infty R$ over a non-Noetherian regular ring $R$ is weakly-morphic by  Corollary~\ref{ftft} but  not endoregular by \cite[Theorem 2]{shanny1971regular}.
\end{Ex}

\noindent    A submodule  $N\subseteq M$  is  {\it relatively divisible-pure} or {\it RD-pure}  in  $M$  in case  $Na=Ma\cap N$ (equivalently, $0\to N\bigotimes R/aR\to M\bigotimes R/aR$ is exact)  for each $a\in R$ \cite[Proposition 2]{warfield1969purity}.
 Proposition~\ref{dadada} gives another class of  weakly-morphic modules.

\begin{proposition}~\label{dadada}
Let $R$ be a ring and $M$ an $R$-module in which every submodule is RD-pure.  Then $M$ is a weakly-morphic module.
\end{proposition}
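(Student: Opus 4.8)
The plan is to establish the stronger splitting $M = Ma \oplus \text{Ann}_M(a)$ for every $a \in R$ and then invoke the implication $(2)\Rightarrow(4)$ of Proposition~\ref{e5e} to conclude that $M$ is weakly-morphic. The entire argument rests on applying the RD-purity hypothesis to two carefully chosen submodules of $M$, namely $Ma$ and $\text{Ann}_M(a)$, each time testing the purity condition $Na = Ma\cap N$ against the element $a$ itself.

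First I would feed the submodule $N = Ma$ into the defining equation. Since $(Ma)a = Ma^2$ while $Ma \cap Ma = Ma$, this at once yields the idempotent-type identity $Ma^2 = Ma$ valid for every $a \in R$. Next, testing the submodule $N = \text{Ann}_M(a)$ against $a$ gives $\text{Ann}_M(a)\cdot a = Ma \cap \text{Ann}_M(a)$; the left-hand side is $0$ by definition of the annihilator, so $Ma \cap \text{Ann}_M(a) = 0$. This already supplies the trivial-intersection half of the desired direct sum.

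It then remains to verify $M = Ma + \text{Ann}_M(a)$. Given $x \in M$, the element $xa$ lies in $Ma = Ma^2$, so $xa = ya^2$ for some $y \in M$; then $(x - ya)a = 0$ shows $x - ya \in \text{Ann}_M(a)$, whereas $ya \in Ma$, so $x = ya + (x-ya) \in Ma + \text{Ann}_M(a)$. Combining this with the trivial intersection gives $M = Ma \oplus \text{Ann}_M(a)$, and Proposition~\ref{e5e} finishes the proof, yielding $M/Ma \cong \text{Ann}_M(a)$.

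The one step that requires an idea rather than routine manipulation is the very first: recognising that the purity condition, once applied to $Ma$ itself, collapses the a priori strictly descending chain of powers $Ma \supseteq Ma^2 \supseteq \cdots$ into the single equality $Ma = Ma^2$. Everything afterwards is the familiar argument that a module decomposes as the internal direct sum of the image and kernel of a $\pi$-regular-type multiplication endomorphism. The only bookkeeping point to confirm is that $Ma$ and $\text{Ann}_M(a)$ are genuine submodules, so that the blanket hypothesis ``every submodule is RD-pure'' applies to them; this is immediate since $R$ is commutative and $\varphi_a$ is an endomorphism.
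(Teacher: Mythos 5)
Your proof is correct and follows essentially the same route as the paper's: both apply the RD-purity condition first to $N=Ma$ (yielding $Ma=Ma^2$) and then to $N=\text{Ann}_M(a)$ (yielding $Ma\cap\text{Ann}_M(a)=0$), derive $M=Ma+\text{Ann}_M(a)$ by the same element-chasing argument, and conclude via the implication $(2)\Rightarrow(4)$ of Proposition~\ref{e5e}. No gaps; nothing further to add.
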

\begin{proof}
We will show that $M=Ma\bigoplus \text{Ann}_M(a)$  for each $a\in R$.   The rest follows from Proposition~\ref{e5e}. Let $\varphi_a\in S$.     Then  $\varphi_a(M)=Ma=Ma\cap Ma=Ma^2=\varphi_a^2(M)$.   So $\varphi_a(M)=\varphi_a^2(M).$    For any   $m\in M$,   $\varphi_a(m)=\varphi_a^2(n)$ for some $n\in M$.   Since $\varphi_a(m-\varphi_a(n))=0, m-\varphi_a(n)\in \ker(\varphi_a)$ and     $m=\varphi_a(n)+m-\varphi_a(n)\in \varphi_a(M)+\ker(\varphi_a)$.
This implies that $M=\varphi_a(M)+\ker(\varphi_a)$.     Next we  show that this is actually a direct sum decomposition.   By the RD-pure property, $0=\text{Ann}_M(a)a=Ma\cap\text{Ann}_M(a)=\varphi_a(M)\cap\ker(\varphi_a)$ for every $a\in R$.  Hence   $M=\varphi_a(M)\bigoplus\ker(\varphi_a)=Ma\bigoplus \text{Ann}_M(a)$.
\end{proof}

\noindent
A submodule $N$ of $M$  is {\it pure} in $M$ if the sequence $0\to N\bigotimes E\to M\bigotimes E$  is exact for each $R$-module $E$.
    By \cite[Proposition 8.1]{Fieldhouse}, every pure submodule is also RD-pure.  Therefore, by Proposition~\ref{dadada},  all modules whose every  submodule is pure are  weakly-morphic.

\begin{Ex}~\label{haga}
\begin{enumerate}
\item[(a)] The converse of Proposition~\ref{dadada} does not hold in general.  The $\Bbb Z$-module $\Bbb Q$  is weakly-morphic but not all its submodules are  (RD-)pure since $2\Bbb Q\cap\Bbb Z\neq 2\Bbb Z$ for the submodule $\Bbb Z$.
\item[(b)] The modules described by Proposition~\ref{dadada} are not necessarily morphic.  If $M$ is a  module  in which every principal submodule is a direct summand (equivalently,
\cite[Corollary 1.3]{zelmanowitz1972regular} every finitely generated submodule is a direct summand),  then every submodule of $M$ is RD-pure by \cite[pg. 240 and 246]{Ramamurth} and  weakly-morphic by Proposition~\ref{dadada}.  However,  by \cite[Corollary 33]{nicholson2005morphic}, $M$ need not be morphic unless $S$ is right morphic.
\end{enumerate}
\end{Ex}

\noindent  An $R$-module $M$ is  a {\it duo module} provided every submodule of $M$ is fully invariant, that is, any  submodule  $N$ of $M$, $\varphi(N)\subseteq N$ for every $\varphi\in S$ \cite{ozcan2006duo}.      A ring $R$ is  {\it right (left) duo ring} if the right (left) $R$-module $R$ is a duo module and {\it duo} if it is both right and left duo.  Clearly, commutative rings  are duo rings.  An $R$-module $M$ is said to be a {\it multiplication module} provided for each submodule $N$ of $M$ there exists an ideal $A$ of $R$ such that $N =MA$ \cite{choi1994endomorphisms}.  Multiplication modules are a source of  duo modules \cite[pg. 534]{ozcan2006duo}.

\begin{proposition}~\label{300}
Let $R$ be a ring and $M$ be a finitely generated multiplication $R$-module.   Then $M$  is weakly-morphic if and only if it is morphic.
\end{proposition}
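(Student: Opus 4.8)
```latex
The plan is to prove that for a finitely generated multiplication module $M$ the notions of weakly-morphic and morphic coincide, by showing that the hypotheses force every endomorphism of $M$ to be of the form $\varphi_a$ for some $a \in R$. Once that is established, the two definitions become literally the same condition, since weakly-morphic means $M/\varphi_a(M) \cong \ker(\varphi_a)$ for every $a$, while morphic means $M/\varphi(M) \cong \ker(\varphi)$ for every $\varphi \in S$. The forward direction (morphic $\Rightarrow$ weakly-morphic) is already noted in the paper as trivial, so the content is entirely in the converse.

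The key structural fact I would invoke is that for a finitely generated multiplication module, every endomorphism is given by multiplication by a ring element. Concretely, if $M$ is a multiplication module then every submodule, and in particular every image $\varphi(M)$ for $\varphi \in S$, has the form $MA$ for some ideal $A$ of $R$. The stronger and more useful statement is that $S = \operatorname{End}_R(M) \cong R/\operatorname{Ann}_R(M) = S_M$, i.e.\ the natural map sending $a$ to $\varphi_a$ is an isomorphism onto all of $S$. First I would establish (or cite) that for $M$ finitely generated and multiplication, any $\varphi \in S$ satisfies $\varphi(m) = ma$ for a fixed $a \in R$ depending only on $\varphi$; this uses that $\varphi(M) = MA$ together with a determinant-trick / Nakayama-type argument on the finite generating set to produce a single scalar. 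This identification $S = S_M$ is the heart of the matter and is the step I expect to be the main obstacle, since it requires combining the multiplication-module hypothesis (to control submodules) with finite generation (to upgrade an ideal-worth of scalars to a single scalar acting as the endomorphism).

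Granting that $S = S_M$, the conclusion is immediate. Every $\varphi \in S$ equals $\varphi_a$ for some $a \in R$, so
\begin{equation*}
M \text{ is weakly-morphic} \iff M/\varphi_a(M) \cong \ker(\varphi_a)\ \forall a \in R \iff M/\varphi(M) \cong \ker(\varphi)\ \forall \varphi \in S \iff M \text{ is morphic}.
\end{equation*}
Thus I would structure the proof in two movements: one lemma-style argument reducing $S$ to $S_M$ under the finitely-generated-multiplication hypothesis, and then a one-line deduction identifying the two morphicity conditions. If a self-contained proof of $S = S_M$ is desired rather than a citation, I would argue that a multiplication module is a duo module (as the paper notes, multiplication modules are a source of duo modules), so every cyclic submodule $mR$ is fully invariant, giving $\varphi(m) \in mR$ for each $m$; the delicate part is then showing the scalars can be chosen uniformly across all of $M$, which is exactly where finite generation enters via a common-denominator argument over a finite generating set. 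That uniformity step is the crux, and I would flag it as the place where the proof genuinely uses both hypotheses simultaneously rather than either one alone.
```
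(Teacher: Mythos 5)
Your proposal is correct and takes essentially the same route as the paper: the paper's entire proof consists of citing the fact that every endomorphism of a finitely generated multiplication module is of the form $\varphi_a$ for some $a\in R$ (Choi--Smith, Theorem 3), after which the equivalence of weakly-morphic and morphic is immediate, with the converse direction holding trivially. The key lemma you identify (and sketch a proof of via the duo-module property plus a uniformity argument over a finite generating set) is exactly the content of that cited theorem.
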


\begin{proof}
Suppose that $M$ is a  finitely generated multiplication module.  Then  every endomorphism  $\varphi\in S$ is of the form $\varphi_a$ for some $a\in R$ by  \cite[Theorem 3]{choi1994endomorphisms}.   Since   $M$ being weakly-morphic implies  every endomorphism in $S$ is   morphic,  $M$  is a morphic  module.    The converse always holds.
\end{proof}

\begin{remark}~\label{goat}
\begin{enumerate}
\item[(a)] A module $M$ is \emph{uniserial} if its submodule lattice forms a  chain.    The $\Bbb Z$-module $\Bbb Z_{p^\infty}$ is uniserial but  not weakly-morphic by Example~\ref{gtg}.    Since commutative rings are duo, by \cite[Proposition  8]{nicholson2005morphic},  every uniserial $R$-module  of finite length   is (weakly-)morphic.
\item[(b)] Since every cyclic $R$-module is a multiplication module that is finitely generated,  it is weakly-morphic if and only if it is morphic by Proposition~\ref{300}.
\end{enumerate}
\end{remark}

\begin{corollary}
Every endoregular duo module is a morphic module.
\end{corollary}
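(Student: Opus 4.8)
The plan is to prove directly that every $\varphi\in S=\mathrm{End}_R(M)$ is morphic. The two hypotheses enter separately: endoregularity supplies that $S$ is a regular ring (and that $M$ is simultaneously kernel-direct and image-direct), while the duo condition forces every idempotent of $S$ to be central. This centrality is the crucial structural input, so I would establish it first. For an idempotent $e\in S$, both $e(M)$ and $(1-e)(M)$ are submodules of $M$, hence fully invariant by the duo property \cite{ozcan2006duo}. Thus any $\varphi\in S$ satisfies $\varphi(e(M))\subseteq e(M)$ and $\varphi((1-e)(M))\subseteq(1-e)(M)$, which translate into $(1-e)\varphi e=0$ and $e\varphi(1-e)=0$; expanding these gives $\varphi e=e\varphi e=e\varphi$, so $e$ is central.

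Next I would fix $\varphi\in S$ and use regularity of $S$ to choose $\psi\in S$ with $\varphi=\varphi\psi\varphi$. Set $e=\psi\varphi$ and $f=\varphi\psi$. A short check shows both are idempotents, that $\ker(\varphi)=\ker(e)=(1-e)(M)$ and $\varphi(M)=f(M)$, and that $M=e(M)\oplus(1-e)(M)=f(M)\oplus(1-f)(M)$. By the first step $e$ and $f$ are central. Using centrality together with $\varphi\psi\varphi=\varphi$ one computes $ef=e\varphi\psi=\varphi e\psi=\varphi\psi\varphi\psi=f$ and, symmetrically, $fe=f\psi\varphi=\psi f\varphi=\psi\varphi\psi\varphi=e$. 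Since central idempotents commute, $ef=fe$ now forces $e=f$. Consequently $\varphi(M)=e(M)$ and $\ker(\varphi)=(1-e)(M)$, whence $M/\varphi(M)=M/e(M)\cong(1-e)(M)=\ker(\varphi)$; that is, $\varphi$ is morphic. As $\varphi$ was arbitrary, $M$ is morphic.

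I expect the identity $e=f$ to be the main obstacle, i.e. reconciling the two a priori different summand decompositions of $M$ arising from the image and from the kernel of $\varphi$. For a general endoregular module the regularity of $S$ (via \cite[Lemma 3.1]{ware1971endomorphism}) yields only $e(M)\cong f(M)$, which is too weak to cancel and conclude $(1-e)(M)\cong(1-f)(M)$; it is precisely the duo hypothesis, through centrality of idempotents, that collapses $e$ and $f$ into a single idempotent and makes the morphic isomorphism literal rather than merely up to cancellation.

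As an alternative, one could package the same idea more abstractly: a regular ring in which all idempotents are central is strongly regular, hence unit-regular, so that $S$ is unit-regular; combined with the fact that an endoregular module is kernel-direct, the result then follows from \cite[Theorem 35]{nicholson2005morphic}. I would nonetheless favour the direct idempotent computation above, since it is self-contained and avoids invoking the unit-regular characterisation.
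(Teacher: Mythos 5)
Your proof is correct, but it takes a genuinely different route from the paper's. The paper argues at the level of the endomorphism ring: by \cite[Lemma 1.1]{ozcan2006duo}, the duo hypothesis lets one write $\varphi(m)=ma$ and $\psi(m)=mb$ for each fixed $m$, and commutativity of $R$ then gives $\varphi\psi=\psi\varphi$, i.e.\ $S$ is a \emph{commutative} regular ring, hence unit-regular; morphicity of $M$ then follows by citing \cite[Example 28]{nicholson2005morphic}. You instead extract from the duo property only the weaker fact that idempotents of $S$ are central (direct summands are fully invariant), and then run a self-contained idempotent computation: with $\varphi=\varphi\psi\varphi$, $e=\psi\varphi$, $f=\varphi\psi$, centrality gives $ef=f$ and $fe=e$, hence $e=f$, so $\varphi(M)=e(M)$ and $\ker(\varphi)=(1-e)(M)$ are complementary summands cut out by the \emph{same} central idempotent, and $M/\varphi(M)\cong(1-e)(M)=\ker(\varphi)$ holds literally. (All the intermediate identities in your computation check out: $\ker(\varphi)=\ker(e)$, $\varphi(M)=f(M)$, and the passage from full invariance of $e(M)$ and $(1-e)(M)$ to $\varphi e=e\varphi e=e\varphi$ are each correct.) What the paper's route buys is brevity and the stronger structural conclusion that $S$ is commutative; what yours buys is self-containedness — no appeal to the unit-regular characterisation of morphic modules — plus a formally weaker hypothesis, since you only need direct summands of $M$ to be fully invariant (i.e.\ $S$ abelian), not every submodule. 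Your closing alternative (central idempotents together with regularity give strong regularity, hence unit-regularity, then \cite[Theorem 35]{nicholson2005morphic}) is in fact the variant closest to the paper's actual argument, differing only in how unit-regularity of $S$ is reached.
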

\begin{proof}
Let $M$ be an endoregular module that is duo.  Then the ring of endomorphisms $S$  of $M$ is regular.  We show that $S$   is unit-regular.
Let $\varphi,\psi\in S$ and $m\in M$.
Since $M$ is duo,  using   \cite[Lemma 1.1]{ozcan2006duo}, $\varphi(m)=ma$ and $\psi(m)=mb$ for some $a$ and $b$ in $R$.  Commutativity of $R$ implies $\varphi(\psi(m)=\varphi(mb)=mba=mab=\psi(ma)=\psi(\varphi(m)).$     Thus  $S$ is a commutative regular ring and hence unit-regular.     Consequently,  $M$ is morphic by \cite[Example  28]{nicholson2005morphic}.
\end{proof}

\section{Weakly-morphic modules over integral domains}
\noindent Let $D$ be an integral domain.  A $D$-module  $M$  is called {\it divisible}  if  $Ma=M$  for each $0\neq a\in D$.  $M$  is said to be {\it torsion-free} if $ma = 0$ implies $a= 0$ or $m = 0$ for each $m\in M$ and $a\in D$.

\begin{proposition}~\label{capsa}
Let  $D$ be a principal ideal domain  and $M:=\bigoplus_{i\in I} M_i$ be a direct sum of finitely generated torsion-free $D$-modules $M_i$.  Then $D$ is  morphic if and only if $M$ is  weakly-morphic.
\end{proposition}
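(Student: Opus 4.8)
The plan is to reduce everything to the structure theorem for finitely generated modules over a PID, together with the equivalence ``$D$ morphic $\iff$ $D$ is a field'' for a domain. First I would record that each $M_i$, being finitely generated and torsion-free over the PID $D$, is free; hence $M=\bigoplus_{i\in I}M_i$ is a free $D$-module, and by the standing convention that modules are non-zero, $M$ has rank at least one, so it decomposes as $M=\bigoplus_{\lambda}De_\lambda$ with at least one summand $De_\lambda\cong D$.

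For the direction ``$D$ morphic $\Rightarrow$ $M$ weakly-morphic'', I would first argue that a morphic domain is a field: for any $0\neq a\in D$ we have $\text{Ann}_D(a)=0$, so since $a$ is morphic, Corollary~\ref{efs} forces $a\in U(D)$; as this holds for every non-zero $a$, $D$ is a field. A field is a regular ring, so Corollary~\ref{ftft} immediately gives that every $D$-module, in particular $M$, is weakly-morphic. (Alternatively one may observe that each $M_i$ is a vector space, hence weakly-morphic, and then invoke Proposition~\ref{p} to pass to the direct sum.)

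For the converse ``$M$ weakly-morphic $\Rightarrow$ $D$ morphic'', I would fix an arbitrary $0\neq a\in D$ and use that $M$ is free, hence torsion-free, so $\text{Ann}_M(a)=0$. Since $M$ is $a$-morphic, the defining isomorphism $M/Ma\cong\text{Ann}_M(a)=0$ yields $Ma=M$ (equivalently, this is Proposition~\ref{efr}). The key step is then to extract invertibility of $a$ from $Ma=M$ on the free module: writing $M=\bigoplus_\lambda De_\lambda$ one has $Ma=\bigoplus_\lambda (Da)e_\lambda$, so $Ma=M$ forces $Da=D$ in each coordinate, whence there is $c\in D$ with $ca=1$ and $a\in U(D)$. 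As $0$ is trivially morphic and every non-zero element of $D$ is now a unit (hence morphic), $D$ is morphic; equivalently $D$ is a field.

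The only point requiring care — the ``main obstacle'', though it is routine — is this last passage from $Ma=M$ to ``$a$ is a unit'', which must be carried out by a coordinate argument valid for free modules of arbitrary (possibly infinite) rank; this is precisely why reducing $M$ to a free module at the outset is essential. I would also flag that both torsion-freeness and freeness are genuinely used: without torsion-freeness the implication $\text{Ann}_M(a)=0\Rightarrow Ma=M$ could fail, and without freeness $Ma=M$ need not force $a$ to be a unit. Finally, the equivalence collapses when $M=0$, so the standing non-triviality hypothesis on $M$ is indispensable in the converse direction.
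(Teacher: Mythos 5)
Your proof is correct, but it takes a genuinely different route from the paper's in both directions. For the forward implication, the paper never passes through the observation that a morphic domain is a field: it decomposes each $M_i$, via the structure theorem, into finitely many cyclic torsion-free summands $M_{ij}\cong D_D$, notes that each such summand is weakly-morphic precisely because $D$ is a morphic ring, and then invokes closure of weakly-morphic modules under direct sums (Proposition~\ref{p}); you instead front-load the reduction ``morphic domain $\Rightarrow$ field'' via Corollary~\ref{efs} and then quote Corollary~\ref{ftft}. For the converse, both arguments reach $Ma=M$ for every $0\neq a\in D$ exactly as you do (torsion-freeness plus Proposition~\ref{efr}), but then they diverge: the paper applies the determinant trick \cite[Corollary 2.5]{Atiyah1969} to produce $d\in D$ with $M(1+ad)=0$ and uses faithfulness of $M$ (torsion-free and non-zero) to conclude $a\in U(D)$, whereas you use freeness of $M$ and a coordinate computation. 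Your variant has a concrete advantage here: the paper's phrase ``since $M$ is a finitely generated module'' is literally valid only when $I$ is finite, and for infinite $I$ its argument must be repaired by applying the determinant trick to a single summand $M_i$ (using that $Ma=M$ forces $M_ia=M_i$); your computation on $\bigoplus_\lambda De_\lambda$ is uniform in the rank, exactly as you flagged. What the paper's Nakayama-style argument buys in exchange is independence from freeness: it needs only a finitely generated, torsion-free, faithful module with $Ma=M$, so it would survive in settings where the structure theorem is unavailable. One tiny wording issue on your side: in your closing remark, the implication $\text{Ann}_M(a)=0\Rightarrow Ma=M$ comes from $a$-morphicity (Proposition~\ref{efr}), not from torsion-freeness, whose role is only to guarantee $\text{Ann}_M(a)=0$; your proof body has this right, but the remark slightly misattributes it.
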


\begin{proof}~
\begin{itemize}
\item[] $(\Rightarrow)$: By the classical theorem for the decomposition of finitely generated modules over principal ideal domains, each $M_i$ is a (finite) direct sum of
cyclic submodules $M_{ij},j=1,2,\ldots k_i$ for some positive integer $k_i$.
Since $M$ is torsion-free, $M_i$ is torsion-free and so is $M_{ij}$.  Further,  for each $j=1,2,\ldots k_i$ $$M_{ij}=mD\cong D/\text{Ann}_{D}(m)$$ for some $0\neq m\in M_{ij}$.   As  $M_{ij}$ is torsion-free, $\text{Ann}_{D}(m)=0$ and $M_{ij}\cong D_{D}$ for each $j=1,2,\ldots k_i$.    If $D$ is  morphic, then
$$M=\displaystyle{\bigoplus_{i\in I} M_i}=\bigoplus_{i\in I}\left(\bigoplus_{j=1}^{k_i}M_{ij}\right)   \cong \bigoplus_{i\in I}\left(\bigoplus_{j=1}^{k_i}D_{ij}\right)$$ with each $D_{ij}=D_{D}$ morphic.   Applying Proposition~\ref{p}, $\displaystyle{\bigoplus_{j=1}^{k_i} D_{ij}}$ is weakly-morphic as a $D$-module, and so is $M$.
\item[($\Leftarrow$):]
Assume that $M$ is a weakly-morphic module.  Since $M$ is torsion-free, $\text{Ann}_M(a)=0$ for every $0\neq a\in D$.   Applying Proposition~\ref{efr},  $M=Ma$.
Since $M$ is a finitely generated module, by \cite[Corollary 2.5]{Atiyah1969}, we have $M(1 + ad)=0$ for some $d\in D$.  This implies that $1+ad\in \text{Ann}_D(M)$.  As $M$ is faithful,    $1=a(-d)$ and  $a$ is a unit element of $D$.  Hence $D$ is a field, which  proves that $D$ is morphic.
\end{itemize}
\end{proof}

\begin{corollary}~\label{caps}
Let $D$ be a principal ideal domain and $M$ be a direct sum of torsion-free cyclic $D$-modules.  Then  $M$ is  weakly-morphic  if and only if $D$ is morphic.
\end{corollary}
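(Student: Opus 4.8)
The plan is to recognise this as an immediate specialisation of Proposition~\ref{capsa}. The single key observation is that any \emph{cyclic} $D$-module is automatically finitely generated, being generated by one element. Hence if $M=\bigoplus_{i\in I}M_i$ is a direct sum of torsion-free cyclic $D$-modules, then each $M_i$ is in particular a finitely generated torsion-free $D$-module, so $M$ satisfies exactly the hypotheses of Proposition~\ref{capsa} with no extra work required on the module-theoretic side; the corollary merely restricts the summands from ``finitely generated torsion-free'' to the strictly stronger ``torsion-free cyclic''.

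First I would make this matching of hypotheses explicit. As already recorded inside the proof of Proposition~\ref{capsa}, a torsion-free cyclic $D$-module $M_i=mD\cong D/\text{Ann}_D(m)$ must satisfy $\text{Ann}_D(m)=0$, whence $M_i\cong D_D$ and $M$ is in fact free, $M\cong\bigoplus_{i\in I}D$. Then I would simply invoke Proposition~\ref{capsa}: since $D$ is a principal ideal domain and $M$ is a direct sum of finitely generated torsion-free $D$-modules, $D$ is morphic if and only if $M$ is weakly-morphic, which is precisely the claimed equivalence.

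The ``hard part'' is essentially non-existent, as the statement is a direct corollary; the only point requiring care is to note that cyclic implies finitely generated so that Proposition~\ref{capsa} genuinely applies. One could instead give a self-contained argument from the free description $M\cong\bigoplus_{i\in I}D$, using Proposition~\ref{p} for the implication that $D$ morphic forces $M$ weakly-morphic, and Proposition~\ref{efr} together with the finite-generation (Nakayama-type) argument for the converse; but this would only re-trace the proof of Proposition~\ref{capsa} and offers no simplification, so I would present the proof as a one-line appeal to that proposition.
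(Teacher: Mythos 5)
Your proof is correct and is essentially identical to the paper's own argument: the paper also proves Corollary~\ref{caps} by observing that a cyclic module is finitely generated and then invoking Proposition~\ref{capsa}. The extra remarks you add (that the summands are in fact isomorphic to $D_D$, making $M$ free) are accurate but not needed for the one-line reduction.
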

\begin{proof}
Since a cyclic module is finitely generated, the result follows  from Proposition~\ref{capsa}.
\end{proof}

\begin{theorem}~\label{fafa}
Let $D$ be an integral domain and  $M$ a weakly-morphic $D$-module.   The following statements are equivalent:
\begin{enumerate}[(1)]
\item[\emph{(1)}] $M$ is torsion-free;
\item[\emph{(2)}] $M$ is divisible;
\item[\emph{(3)}] $S_M$ is a field.
\end{enumerate}
\end{theorem}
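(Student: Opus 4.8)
The plan is to prove the cheap equivalence $(1)\Leftrightarrow(2)$ directly from Proposition~\ref{efr}, and then to couple both conditions to $(3)$ through the ideal $\text{Ann}_D(M)$, which is exactly the kernel of the natural surjection $D\to S_M$. Since $M$ is weakly-morphic it is $a$-morphic for every $a$, so for each $0\neq a\in D$ Proposition~\ref{efr} gives the pointwise equivalence $\text{Ann}_M(a)=0 \Leftrightarrow Ma=M$. Because $D$ is a domain, torsion-freeness of $M$ is precisely the statement that $\text{Ann}_M(a)=0$ for all $0\neq a$, while divisibility is precisely $Ma=M$ for all $0\neq a$. Quantifying the pointwise equivalence over all nonzero $a$ yields $(1)\Leftrightarrow(2)$ with no further work.

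For $(3)\Rightarrow(1)$ I would argue at the level of $S_M\cong D/\text{Ann}_D(M)$. Assume $S_M$ is a field. First I would record that no nonzero $a$ can lie in $\text{Ann}_D(M)$: otherwise $Ma=0$ with $a\neq 0$ and $M$ nontrivial would already violate torsion-freeness, so I must establish $\text{Ann}_D(M)=0$ as part of this direction. For $0\neq a\in D$ the class $\bar a$ is then a nonzero element of the field $S_M$, hence a unit, so there is $b\in D$ with $mab=m$ for every $m\in M$; consequently $ma=0$ forces $m=(ma)b=0$, giving $\text{Ann}_M(a)=0$ and thus $(1)$.

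For the reverse passage $(2)\Rightarrow(3)$ I would again reduce to showing each nonzero $\bar a$ is a unit of $S_M$. Torsion-freeness (available via $(1)\Leftrightarrow(2)$) forces $\text{Ann}_D(M)=0$, so $S_M\cong D$ is an integral domain, and it remains to see that every $0\neq a$ is invertible there. Divisibility plus torsion-freeness make $\varphi_a$ an automorphism of $M$ by Proposition~\ref{efr}; the target is to manufacture $b\in D$ with $ab=1$, equivalently $M(1-ab)=0$, which exhibits $\varphi_a^{-1}$ as the multiplication $\varphi_b$ and places it inside $S_M$. Producing such a $b$ from the surjectivity $M=Ma$ is exactly the Nakayama-style maneuver used in the proof of Proposition~\ref{capsa} (pass from $M=Ma$ to $M(1-ad)=0$ and read off $1-ad\in\text{Ann}_D(M)=0$).

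The hard part will be precisely this last promotion of ``$\varphi_a$ is an automorphism of $M$'' to ``$\bar a$ is a unit of $S_M$'': a priori the inverse of $\varphi_a$ lives in the full endomorphism ring $S=\text{End}_D(M)$, not visibly in the subring $S_M$ of $R$-multiplications, so the content of $(3)$ is that this inverse can be realized by an element of $D$. I expect the whole equivalence with $(3)$ to hinge on pinning down the identity $M(1-ab)=0$, and hence on having enough finiteness (as in Proposition~\ref{capsa}) to run the Nakayama argument; that single identity is what simultaneously forces $\text{Ann}_D(M)=0$ and makes $S_M$ a field, so I would concentrate the effort there and treat the remaining implications as bookkeeping around it.
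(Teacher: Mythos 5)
Your $(1)\Leftrightarrow(2)$ is exactly the paper's argument: quantify Proposition~\ref{efr} over all $0\neq a\in D$, using that torsion-freeness means $\text{Ann}_M(a)=0$ for every $0\neq a$ and divisibility means $Ma=M$ for every $0\neq a$. The paper then disposes of $(3)$ in the same one-line proof, implicitly reading ``$\varphi_a$ is an automorphism of $M$ for every $0\neq a$'' (which is what Proposition~\ref{efr} delivers) as ``$S_M$ is a field''. You refused to make that identification and isolated it as the hard part; that instinct is correct, and in fact the identification cannot be made under the stated hypotheses.

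Both gaps you flagged are fatal rather than merely hard. For $(1),(2)\Rightarrow(3)$: the inverse of the automorphism $\varphi_a$ lives in $S$, and there is in general no $b\in D$ with $M(1-ab)=0$; the Nakayama step of Proposition~\ref{capsa} requires $M$ finitely generated, and without that hypothesis the implication is false --- take $D=\Bbb Z$ and $M=\Bbb Q$, which the paper itself notes is (weakly-)morphic: it is torsion-free and divisible, yet $S_M\cong \Bbb Z/\text{Ann}_{\Bbb Z}(\Bbb Q)\cong\Bbb Z$ is not a field. For $(3)\Rightarrow(1)$: your justification of $\text{Ann}_D(M)=0$ is circular (it appeals to torsion-freeness, the very conclusion), and no correct justification exists, because $(3)$ does not imply it --- take $D=\Bbb Z$ and $M=\Bbb Z_p$, which is weakly-morphic (Theorem~\ref{das}) with $S_M\cong\Bbb Z_p$ a field, yet is neither torsion-free nor divisible over $\Bbb Z$. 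So the theorem in its literal reading fails in both directions; what Proposition~\ref{efr} (and your bookkeeping around it) actually proves is that $(1)$ and $(2)$ are each equivalent to ``$\varphi_a$ is invertible in $S$ for every $0\neq a\in D$''. Promoting invertibility in $S$ to invertibility in the subring $S_M$ is precisely the extra content of $(3)$, and it requires additional hypotheses such as $M$ faithful and finitely generated (as in Proposition~\ref{capsa}) --- exactly the finiteness you identified as missing. In short: your attempt does not prove the statement, but neither does the paper's; the obstruction you located is a genuine flaw in the statement itself, not a defect of your approach.
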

\begin{proof}
Since the $D$-module $M$ is torsion-free (resp., divisible) if and only if $\text{Ann}_M(a)=0$ (resp., $Ma=M$) for every $0\neq a\in D$, the equivalences $(1)\Leftrightarrow(2)\Leftrightarrow(3)$  follow from Proposition~\ref{efr}.
\end{proof}

\noindent Denote the field  of fractions of an integral domain $D$  by $\text{Frac}(D)$.
\begin{corollary}~\label{oil}
Let $D$ be a principal ideal domain and  $M$ a weakly-morphic $D$-module.   The following statements are equivalent:
\begin{enumerate}[(1)]
\item[\emph{(1)}] $M$ is torsion-free;
\item[\emph{(2)}] $M$ is divisible;
\item[\emph{(3)}] $M$ is injective;
\item[\emph{(4)}] $S_M$ is a field;
\item[\emph{(5)}] $M$ is a vector space over $Q$, where $Q: = \text{Frac}(D)$.
\end{enumerate}
\end{corollary}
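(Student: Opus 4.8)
The plan is to build on Theorem~\ref{fafa}, which already supplies the equivalences $(1)\Leftrightarrow(2)\Leftrightarrow(4)$ over an arbitrary integral domain; since a principal ideal domain is in particular an integral domain, these hold here verbatim. It therefore remains only to splice conditions $(3)$ and $(5)$ into the cycle, and for this I would establish $(2)\Leftrightarrow(3)$ and $[(1)\text{ and }(2)]\Leftrightarrow(5)$. Both are classical facts about modules over a PID that do not even require the weakly-morphic hypothesis; the role of that hypothesis is only to let Theorem~\ref{fafa} collapse $(1)$ and $(2)$ into one condition.

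For $(2)\Leftrightarrow(3)$, I would first note that injectivity implies divisibility over any integral domain: given $0\neq a\in D$ and $m\in M$, the map $aD\to M,\ ad\mapsto md$ is well defined (as $D$ is a domain and $a\neq 0$) and extends along the inclusion $aD\hookrightarrow D$ to some $g:D\to M$, whence $m=g(a)=g(1)a\in Ma$. For the converse I would invoke Baer's criterion together with the hypothesis that $D$ is a PID: every ideal has the form $aD$, so given $f:aD\to M$ with $f(a)=m$ and $a\neq 0$, divisibility produces $m'\in M$ with $m'a=m$, and the assignment $d\mapsto m'd$ then extends $f$ to all of $D$. Hence divisibility and injectivity coincide.

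For $[(1)\text{ and }(2)]\Leftrightarrow(5)$, I would pass to the field of fractions $Q=\text{Frac}(D)$. If $M$ is a $Q$-vector space, it is visibly torsion-free and divisible as a $D$-module, giving $(5)\Rightarrow(1)$ and $(5)\Rightarrow(2)$. Conversely, if $M$ is torsion-free and divisible, then for $m\in M$ and $0\neq b\in D$ divisibility supplies $m'$ with $m'b=m$ while torsion-freeness makes $m'$ unique; defining $m\cdot(a/b):=m'a$ yields a well-defined $Q$-action extending the $D$-action, so $M$ becomes a $Q$-vector space. Since by Theorem~\ref{fafa} the conditions $(1)$ and $(2)$ are equivalent for the weakly-morphic module $M$, each of them alone is equivalent to $(5)$, and the cycle $(1)\Leftrightarrow(2)\Leftrightarrow(3)\Leftrightarrow(4)\Leftrightarrow(5)$ closes.

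There is no genuinely hard step here: all content beyond Theorem~\ref{fafa} reduces to the observation that a PID is Dedekind (so divisible $=$ injective) and that a torsion-free divisible module over a domain is precisely a vector space over its fraction field. The only point needing a little care is the well-definedness and uniqueness in the construction of the $Q$-action, which rests exactly on combining divisibility (existence of $m'$) with torsion-freeness (uniqueness of $m'$); this is the one place where conditions $(1)$ and $(2)$ are both genuinely used.
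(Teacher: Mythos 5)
Your proposal is correct as a derivation from the paper's Theorem~\ref{fafa}, and it shares the paper's overall architecture: both take the equivalences $(1)\Leftrightarrow(2)\Leftrightarrow(4)$ directly from Theorem~\ref{fafa} and treat $(3)$ and $(5)$ as classical facts about modules over a PID that need no weakly-morphic hypothesis. Within that frame you differ in two places. For $(2)\Leftrightarrow(3)$ the paper simply cites \cite[Corollary 3.35]{rotman2008introduction}; your Baer-criterion argument is the same fact written out, a cosmetic difference. The substantive difference is how $(5)$ is spliced in: the paper asserts an \emph{unconditional} equivalence $(4)\Leftrightarrow(5)$, citing \cite[pg. 97]{rotman2008introduction}, whereas you prove the unconditional equivalence $[(1)\wedge(2)]\Leftrightarrow(5)$ and then let Theorem~\ref{fafa} collapse $(1)$ and $(2)$. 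Your wiring is the better one: ``torsion-free and divisible $\Leftrightarrow$ vector space over $Q$'' is true over any domain and is presumably what the cited passage of Rotman actually provides, while $(4)\Leftrightarrow(5)$ is \emph{not} an unconditional fact --- for $M=\mathbb{Q}$ over $D=\mathbb{Z}$ condition $(5)$ holds, yet $S_M\cong \mathbb{Z}/\text{Ann}_{\mathbb{Z}}(\mathbb{Q})\cong\mathbb{Z}$ is not a field --- so no citation can deliver that step on its own, and your route avoids this slip in the paper's proof. One caveat applies to both proofs equally: this same module $\mathbb{Q}$ is weakly-morphic (indeed morphic) and torsion-free with $S_M$ not a field, so the equivalence of $(4)$ with the remaining conditions stands or falls with Theorem~\ref{fafa} itself (whose proof shows each $\varphi_a$, $a\neq 0$, is invertible in $\operatorname{End}_D(M)$, not that its inverse lies in $S_M$); you and the paper rely on that theorem in identical fashion, and your argument introduces no gap beyond that inherited one.
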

\begin{proof}
 The equivalences (2)$\Leftrightarrow$(3)   and (4)$\Leftrightarrow$(5) follow from  \cite[Corollary 3.35 and pg. 97 resp.]{rotman2008introduction} without weakly-morphic hypothesis and   Theorem~\ref{fafa} completes the proof of the other equivalences.
\end{proof}

\noindent Corollary~\ref{oil} tells us that the weakly-morphic torsion-free  $\Bbb Z$-modules are of the form $M\cong \Bbb Q\bigoplus \Bbb Q\bigoplus\cdots\bigoplus \Bbb Q$.  Now we  give a global version of Proposition~\ref{requ} in Theorem~\ref{ftftf}  for domains.
\begin{theorem}~\label{ftftf}
Let $D$ be a Noetherian integral domain.  The following statements are equivalent:
\begin{enumerate}[(1)]
\item[\emph{(1)}] $D$ is Artinian,
\item[\emph{(2)}] every torsion-free $D$-module is weakly-morphic,
\item[\emph{(3)}] every divisible $D$-module is weakly-morphic.
\end{enumerate}
\end{theorem}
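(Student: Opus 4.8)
The plan is to route all the equivalences through a single device: for each of the two converse implications I pick a faithful test module whose endomorphism ring is $D$ itself, and then quote Theorem~\ref{fafa}; the forward implications reduce to the classical fact that an Artinian integral domain is a field.

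For $(1)\Rightarrow(2)$ and $(1)\Rightarrow(3)$ I would first record that a Noetherian integral domain which is Artinian is a field: for $0\neq a\in D$ the chain $aD\supseteq a^2D\supseteq\cdots$ stabilises, so $a^n=a^{n+1}b$ for some $b$ and some $n$, whence $a^n(1-ab)=0$ and therefore $1=ab$ since $D$ is a domain. A field is a (von Neumann) regular ring, so by Corollary~\ref{ftft} every $D$-module is weakly-morphic. In particular every torsion-free module and every divisible module is weakly-morphic, which gives (2) and (3) at once.

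For $(2)\Rightarrow(1)$ I would test the hypothesis on $M=D$. Since $D$ is a domain, $D_D$ is torsion-free, hence weakly-morphic by (2); and $D$ is faithful, so $S_M\cong D/\text{Ann}_D(D)=D$. The implication $(1)\Rightarrow(3)$ of Theorem~\ref{fafa}, applied to the torsion-free weakly-morphic module $D$, then shows that $S_M\cong D$ is a field, so $D$ is Artinian. (Directly, weak-morphicity of $D_D$ gives $D/Da\cong\text{Ann}_D(a)=0$ for each $0\neq a$, so every nonzero element is a unit.)

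For $(3)\Rightarrow(1)$ the appropriate test object is the field of fractions $Q:=\text{Frac}(D)$, regarded as a $D$-module: it is divisible, hence weakly-morphic by (3), and faithful, so once more $S_M\cong D$. Invoking the implication $(2)\Rightarrow(3)$ of Theorem~\ref{fafa} for the divisible weakly-morphic module $Q$ forces $S_M\cong D$ to be a field, and again $D$ is Artinian. I expect no genuinely hard step here; the only real content is recognising the correct faithful test module for each condition ($D$ for torsion-freeness, $Q$ for divisibility), after which Theorem~\ref{fafa} supplies the conclusion. It is worth remarking that these arguments do not in fact use the Noetherian hypothesis, which serves mainly to align the statement with the local version in Proposition~\ref{requ}.
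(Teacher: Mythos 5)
Your implications (1)$\Rightarrow$(2), (1)$\Rightarrow$(3) and (2)$\Rightarrow$(1) are correct: an Artinian domain is a field, a field is regular, so Corollary~\ref{ftft} gives both forward implications, and your direct parenthetical argument for (2)$\Rightarrow$(1) (weak-morphicity of the torsion-free module $D_D$ gives $D/Da\cong\text{Ann}_D(a)=0$, so every nonzero $a$ is a unit) is sound. The genuine gap is in (3)$\Rightarrow$(1). The module $Q=\text{Frac}(D)$ is weakly-morphic over \emph{every} integral domain, unconditionally: for $0\neq a\in D$, multiplication by $a$ is an automorphism of $Q$, so $Q/Qa=0\cong\text{Ann}_Q(a)$. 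Hypothesis (3) therefore carries no information when tested on $Q$, and any argument that deduces ``$D$ is a field'' from the weak-morphicity of $Q$ would prove that every integral domain is a field. The step that fails is the appeal to the implication ``divisible and weakly-morphic $\Rightarrow$ $S_M$ is a field'' of Theorem~\ref{fafa}: your own test module refutes that implication whenever $D$ is not a field, since $Q$ is divisible, weakly-morphic and faithful, yet $S_Q\cong D$. What Proposition~\ref{efr} (the engine behind Theorem~\ref{fafa}) actually yields for a divisible weakly-morphic module $M$ is that each $\varphi_a$, $0\neq a\in D$, is an automorphism of $M$; this makes $S_M$ a field only if those inverse automorphisms again lie in $S_M$, which holds for $M=D_D$ (hence your (2)$\Rightarrow$(1) survives) but fails for $M=Q$. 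So Theorem~\ref{fafa} cannot be invoked in the direction you need.

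The paper avoids this trap by deducing all the equivalences from Proposition~\ref{requ}, whose proof replaces your $Q$ by an injective cogenerator $E$ of \textbf{Mod}-$D$; that is the correct test module for (3). Indeed, $E$ is injective over a domain, hence divisible, hence weakly-morphic by (3), and Proposition~\ref{efr} then forces $\text{Ann}_E(a)=0$ for every $0\neq a\in D$. But if some nonzero $a$ were not a unit, then $D/aD\neq0$, and the cogenerator property supplies a nonzero homomorphism $\phi:D/aD\to E$; any nonzero element of its image lies in $\text{Ann}_E(a)$, a contradiction. Hence $D$ is a field, so Artinian. Finally, your closing remark that the Noetherian hypothesis is never used is, as written, a conclusion drawn from the broken step: it happens to be repairable (the cogenerator argument just given avoids Noetherianness, as does your (2)$\Rightarrow$(1)), but the paper's own route through Proposition~\ref{requ} does invoke it (via Cohen's theorem), and the $Q$-based justification you give for discarding it does not stand.
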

\begin{proof}
Since the $D$-module $M$ is torsion-free (resp., divisible) if and only if $\text{Ann}_M(a)=0$ (resp., $Ma=M$) for every $0\neq a\in D$,  (1)$\Leftrightarrow$(2) and (1)$\Leftrightarrow$(3) are immediate from Proposition~\ref{requ} .
\end{proof}

\noindent An endomorphism $\alpha$ of a  group $G$ is {\it normal} \cite{li2010morphic}  if the image $\alpha(G)$ is a normal subgroup in $G$, and  $\alpha$ is morphic if and only if it is both normal and $G/\alpha(G)\cong\ker(\alpha)$.   A group $G$ is {\it morphic} if every endomorphism of $G$ is morphic.  Note that an Abelian group is morphic if and only if it is morphic as a $\Bbb Z$-module.   It is well-known that if $G$ is a finite Abelian group of order  $n$, then  the endomorphism $\varphi_a(g) = ga$ of $G$, where $g\in G$ and $a\in\Bbb Z$,
 is an automorphism if and only if $(a,n) = 1$, where  $(a,n) = 1$ means that $n$  is coprime with $a$.
  We say that a finitely generated Abelian group is {\it weakly-morphic} if it is weakly-morphic as a $\Bbb Z$-module.
 We will give conditions for   weakly-morphic finitely generated  Abelian groups to be morphic.   But first, we recall  a related result about morphic finitely generated Abelian groups from \cite{nicholson2005morphic}.

\begin{theorem}{\rm\cite[Theorem 26]{nicholson2005morphic}}~\label{rats}  A finitely generated Abelian  group is  morphic  if  and  only if it is finite and  each $(p)$-primary  component has the form  $(\Bbb Z_{p^k})^n$ for  some non-negative integers $n$  and $k$.
\end{theorem}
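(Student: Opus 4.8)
The plan is to prove both implications by first reducing to the primary components and then treating the homogeneous $p$-primary pieces by hand. Write a finitely generated Abelian group as $G \cong \Bbb Z^r \oplus \bigoplus_p G_p$, with each $G_p$ the finite $(p)$-primary component. For the finiteness claim I would note that every morphic module is weakly-morphic, so a morphic finitely generated Abelian group is weakly-morphic and hence finite by Theorem~\ref{das}; alternatively, one argues directly that if $r \ge 1$ then multiplication by a prime $p$ has kernel $0 \oplus T[p]$ (with $T$ the torsion part) while its cokernel is $(\Bbb Z_p)^r \oplus T/pT$, and since $|T[p]| = |T/pT|$ these two finite groups have orders differing by the factor $p^r$, ruling out the morphic isomorphism. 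Either way $G$ is finite, so $G = \bigoplus_p G_p$. Since $\operatorname{Hom}_{\Bbb Z}(G_p, G_q) = 0$ for $p \ne q$, the morphic direct-sum results \cite[Lemma 25 and Theorem 23]{nicholson2005morphic} show that $G$ is morphic if and only if every $G_p$ is morphic. The theorem therefore reduces to: a finite Abelian $p$-group $G_p \cong \bigoplus_i \Bbb Z_{p^{e_i}}$ is morphic if and only if all exponents $e_i$ coincide, i.e. $G_p \cong (\Bbb Z_{p^k})^n$.

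For the \emph{if} direction I would show $(\Bbb Z_{p^k})^n$ is morphic by a Smith-normal-form computation. Realize $M = (\Bbb Z_{p^k})^n$ as $\Bbb Z^n / p^k\Bbb Z^n$ and lift an arbitrary $\alpha \in \operatorname{End}(M)$ to an integer matrix $A$. Writing its Smith normal form $A = UDV$ with $U, V \in GL_n(\Bbb Z)$ and $D = \operatorname{diag}(d_1, \ldots, d_n)$, the reductions of $U$ and $V$ are automorphisms of $M$, so $\ker(\alpha)$ and $\operatorname{coker}(\alpha)$ are isomorphic to those of the reduced diagonal map. On each coordinate the reduction of multiplication by $d_i$ is, up to a unit of $\Bbb Z_{p^k}$, multiplication by $p^{a_i}$ with $a_i = \min(v_p(d_i), k)$, and a direct check in $\Bbb Z_{p^k}$ gives $\ker(p^{a_i}\cdot) \cong \Bbb Z_{p^{a_i}} \cong \Bbb Z_{p^k}/p^{a_i}\Bbb Z_{p^k} = \operatorname{coker}(p^{a_i}\cdot)$. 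Summing over $i$ yields $\ker(\alpha) \cong \bigoplus_i \Bbb Z_{p^{a_i}} \cong \operatorname{coker}(\alpha)$, so $M$ is morphic.

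For the \emph{only if} direction, suppose $G_p$ is not homogeneous, so its decomposition contains two nontrivial summands $\Bbb Z_{p^a} \oplus \Bbb Z_{p^b}$ with $1 \le a < b$; write $G_p = \Bbb Z_{p^a} \oplus \Bbb Z_{p^b} \oplus H$. I would define $\alpha(u, v, h) = (\pi(v), 0, 0)$, where $\pi : \Bbb Z_{p^b} \to \Bbb Z_{p^a}$ is the natural surjection. Then $\operatorname{im}(\alpha) = \Bbb Z_{p^a} \oplus 0 \oplus 0$, so $\operatorname{coker}(\alpha) \cong \Bbb Z_{p^b} \oplus H$, whereas $\ker(\alpha) = \Bbb Z_{p^a} \oplus p^a\Bbb Z_{p^b} \oplus H \cong \Bbb Z_{p^a} \oplus \Bbb Z_{p^{b-a}} \oplus H$. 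Cancelling the common summand $H$ (finite Abelian groups cancel), a morphic $\alpha$ would force $\Bbb Z_{p^a} \oplus \Bbb Z_{p^{b-a}} \cong \Bbb Z_{p^b}$; but with $a \ge 1$ and $b - a \ge 1$ the left-hand side is non-cyclic while the right-hand side is cyclic, a contradiction. Hence each $G_p$ is homogeneous, which completes the proof.

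The step I expect to be the main obstacle is the \emph{if} direction. Making the Smith-normal-form reduction rigorous requires verifying that every endomorphism of $\Bbb Z_{p^k}^{\,n}$ genuinely lifts to an integer matrix and that composing with the reductions of the automorphisms $U, V$ preserves kernels and cokernels up to isomorphism; the delicate bookkeeping lies in the $p$-adic valuations $a_i = \min(v_p(d_i), k)$, in particular the capping at $k$, which is exactly what makes the kernel and cokernel of each diagonal entry coincide.
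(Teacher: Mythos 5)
Your proposal is correct, but there is nothing in the paper to compare it against: Theorem~\ref{rats} is quoted verbatim from \cite[Theorem 26]{nicholson2005morphic} and used as a black box (it feeds into Theorem~\ref{dsa}); the paper supplies no proof of it. Judged on its own, your argument is sound and self-contained. The reduction $G\cong\Bbb Z^r\oplus\bigoplus_p G_p$, the use of $\operatorname{Hom}(G_p,G_q)=0$ together with the direct-sum/summand results \cite[Lemma 25 and Theorem 23]{nicholson2005morphic}, the Smith-normal-form computation showing $(\Bbb Z_{p^k})^n$ is morphic (lifting to $\Bbb Z^n$ is legitimate since $\Bbb Z^n$ is free, unimodular matrices reduce to automorphisms mod $p^k$, and the capped valuations $a_i=\min(v_p(d_i),k)$ give $\ker\cong\operatorname{coker}\cong\bigoplus_i\Bbb Z_{p^{a_i}}$), and the explicit non-morphic endomorphism $\alpha(u,v,h)=(\pi(v),0,0)$ in the inhomogeneous case (where kernel and cokernel have equal order but different $p$-rank) all check out. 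Two remarks on your choices. First, invoking Theorem~\ref{das} for finiteness is not circular within this paper, since Theorem~\ref{das} is proved independently of Theorem~\ref{rats}; still, your alternative direct argument (multiplication by $p$ has kernel of order $|T[p]|$ but cokernel of order $p^r|T[p]|$, using $|T[p]|=|T/pT|$) is preferable if one wants Theorem~\ref{rats} to stand on its own, and it is correct. Second, in the only-if step you could avoid appealing to cancellation of finite Abelian groups by simply comparing $p$-ranks: $\ker(\alpha)$ has $p$-rank $2+\operatorname{rank}_p(H)$ while $\operatorname{coker}(\alpha)$ has $p$-rank $1+\operatorname{rank}_p(H)$; but cancellation (Krull--Schmidt) is also valid here.
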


\noindent  For weakly-morphic finitely generated $\Bbb Z$-modules, we have:

\begin{theorem}~\label{das}
A finitely generated $\Bbb Z$-module is weakly-morphic  if and only if it is finite.
\end{theorem}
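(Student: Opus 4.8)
The plan is to reduce everything to the fundamental structure theorem: a finitely generated $\Bbb Z$-module can be written as $M\cong\Bbb Z^r\oplus T$ with free rank $r\geq 0$ and finite torsion part $T$, and $M$ is finite exactly when $r=0$. I would then handle the two implications separately.

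For ``finite $\Rightarrow$ weakly-morphic'', first I would verify directly that each finite cyclic group $\Bbb Z_n$ is weakly-morphic: for $a\in\Bbb Z$ with $d=\gcd(a,n)$ one has $a\Bbb Z_n=d\Bbb Z_n$ and $\text{Ann}_{\Bbb Z_n}(a)=(n/d)\Bbb Z_n$, so $\Bbb Z_n/a\Bbb Z_n\cong\Bbb Z_d\cong\text{Ann}_{\Bbb Z_n}(a)$. (This also follows from Remark~\ref{goat} together with the fact that $\Bbb Z_n$ is a morphic ring.) Since a finite abelian group is a finite direct sum of such cyclic groups, the direct-sum closure of Proposition~\ref{p} immediately yields that it is weakly-morphic.

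For the converse ``weakly-morphic $\Rightarrow$ finite'', I would use a cardinality count at $a=2$. Writing $M\cong\Bbb Z^r\oplus T$, one gets $M/2M\cong(\Bbb Z_2)^r\oplus(T/2T)$ and, since $\Bbb Z^r$ is torsion-free, $\text{Ann}_M(2)=0\oplus\text{Ann}_T(2)$. For the finite group $T$ the four-term exact sequence $0\to\text{Ann}_T(2)\to T\xrightarrow{\,2\,}T\to T/2T\to 0$ forces $|T/2T|=|\text{Ann}_T(2)|$, whence $|M/2M|=2^r\,|\text{Ann}_T(2)|$ while $|\text{Ann}_M(2)|=|\text{Ann}_T(2)|$. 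The weakly-morphic isomorphism $M/2M\cong\text{Ann}_M(2)$ equates these cardinalities, giving $2^r=1$, so $r=0$ and $M=T$ is finite.

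The step needing the most care is this converse, and the point worth flagging is \emph{why} the global count is necessary rather than a shortcut. One cannot simply pass to the free summand $\Bbb Z^r$ and invoke Example~\ref{deted} (that $\Bbb Z^r$ is not $a$-morphic), because it is unknown whether a direct summand of a weakly-morphic module is weakly-morphic --- this is precisely the open Question~\ref{thn} --- and Proposition~\ref{g} is unavailable since $\text{Hom}(\Bbb Z^r,T)\neq 0$ in general. The cardinality argument circumvents this: the free part multiplies $|M/aM|$ by $|a|^r$ but contributes nothing to $\text{Ann}_M(a)$, so any $a$ with $|a|\geq 2$ detects a nonzero free rank and breaks the weakly-morphic balance.
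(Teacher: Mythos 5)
Your proof is correct, and while your ``finite $\Rightarrow$ weakly-morphic'' direction is essentially the paper's argument (structure theorem, verify the claim on cyclic pieces, then invoke the direct-sum closure of Proposition~\ref{p} --- you decompose into cyclic groups $\Bbb Z_n$ with a clean gcd computation, the paper into $(p)$-primary components evaluated at $a=p^t$), your converse is a genuinely different argument. The paper does \emph{not} count: writing $M\cong\bigoplus_i\Bbb Z/p_i^{n_i}\Bbb Z\oplus\Bbb Z^n$ with torsion part of order $\omega$, it picks $a\neq 0,\pm1$ coprime to $\omega$, observes that $\varphi_a$ is then injective on all of $M$ (an automorphism of the torsion part, injective on $\Bbb Z^n$) but not surjective, and derives a contradiction with Proposition~\ref{efr}, which says that for an $a$-morphic module $\text{Ann}_M(a)=0$ forces $\varphi_a$ to be an automorphism. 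Your cardinality balance at $a=2$, namely $|M/2M|=2^r\,|\text{Ann}_T(2)|$ versus $|\text{Ann}_M(2)|=|\text{Ann}_T(2)|$, reaches the same conclusion without ever arranging $\text{Ann}_M(a)=0$, so it bypasses Proposition~\ref{efr} entirely and works at a single fixed $a$; the price is that it is special to the counting situation (finite torsion part), whereas the paper's injective-but-not-surjective mechanism is the same one it reuses elsewhere (Example~\ref{deted}, Proposition~\ref{requ}) and needs no finiteness of any auxiliary invariant beyond coprimality. Your cautionary remark is also well placed: both proofs must (and do) argue globally on $M$, since restricting to the summand $\Bbb Z^r$ and quoting Example~\ref{deted} would presuppose an affirmative answer to the open Question~\ref{thn}, and Proposition~\ref{g} is inapplicable because $\text{Hom}(\Bbb Z^r,T)\neq 0$ when $T\neq 0$.
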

\begin{proof}~
\begin{itemize}
  \item[] $(\Rightarrow)$: Suppose that $M$  is a weakly-morphic and finitely generated $\Bbb Z$-module.  By the fundamental theorem  of  finitely generated $\Bbb Z$-modules, $M$ is a direct sum of primary and infinite cyclic groups, that is,
$$M\cong\displaystyle{\bigoplus_{(i)}\Bbb Z/p_i^{n_i}\Bbb Z\bigoplus\Bbb Z^n}$$ where $i,n\in \Bbb Z_{\geq0}$ and $p$ a prime number.  It is enough to show that $M$ contains  no  infinite summand.   Suppose that the summand $\Bbb Z^n$ is nontrivial.  Let $\omega$ be the order of the group $\bigoplus_{(i)}\displaystyle{\Bbb Z/p_i^{n_i}\Bbb Z}$.   For all $0\neq a\in \Bbb Z$ for which $(\omega,a)=1$,  the induced endomorphism $\varphi_a$ is an  automorphism of $\bigoplus_{(i)}\displaystyle{\Bbb Z/p_i^{n_i}\Bbb Z}$ and also injective on $\Bbb Z^n$.   So $\varphi_a$ is injective on $M$ but not
surjective on $\Bbb Z^n$ for any $a\neq\pm1$.
Therefore,  $\varphi_a$ is  not an automorphism of $M$.    In view of Proposition~\ref{efr}, $M$ is not $a$-morphic for all $a\in \Bbb Z$; which creates a contradiction.
Hence  $M$ is a direct sum  of only its  primary  components.  So $M$ is finite.
\item[]$(\Leftarrow)$: Assume that  $M$ is finite.
By the primary decomposition of finite Abelian groups, $$M\cong\bigoplus_{(p)}M_{(p)}$$ where
$M_{(p)}=\Bbb Z/p^{n_1}\Bbb Z\bigoplus \Bbb Z/p^{n_2}\Bbb Z\bigoplus\cdots\bigoplus \Bbb Z/p^{n_k}\Bbb Z$ are the  cyclic $(p)$-primary components and $n_1\geq n_2\geq\cdots\geq n_k$ are non-zero integers.  If $a=p^t$ where $t$ is a non-negative integer, then $M_{(p)}a=M_{(p)}p^t=p^t\Bbb Z/p^{n_1}\Bbb Z\bigoplus p^t\Bbb Z/p^{n_2}\Bbb Z\bigoplus\cdots\bigoplus p^t\Bbb Z/p^{n_k}\Bbb Z.$
It follows  that

\begin{align}
\left.M_{(p)}\right/M_{(p)}a
&=\frac{\Bbb Z/p^{n_1}\Bbb Z\bigoplus \Bbb Z/p^{n_2}\Bbb Z\bigoplus\cdots\bigoplus \Bbb Z/p^{n_k}\Bbb Z}{(p^t\Bbb Z/p^{n_1}\Bbb Z\bigoplus p^t\Bbb Z/p^{n_2}\Bbb Z\bigoplus\cdots\bigoplus p^t\Bbb Z/p^{n_k}\Bbb Z)}\nonumber\\
&\cong \left(\frac{\Bbb Z/p^{n_1}\Bbb Z}{p^t\Bbb Z/p^{n_1}\Bbb Z}\right)\bigoplus\left(\frac{\Bbb Z/p^{n_2}\Bbb Z}{p^t\Bbb Z/p^{n_2}\Bbb Z}\right)\bigoplus\cdots\bigoplus\left(\frac{\Bbb Z/p^{n_k}\Bbb Z}{p^t\Bbb Z/p^{n_k}\Bbb Z}\right)\nonumber\\
&\cong \Bbb Z/p^t\Bbb Z\bigoplus \Bbb Z/p^t\Bbb Z\bigoplus\cdots\bigoplus\Bbb Z/p^t\Bbb Z\nonumber\\
&\cong \text{Ann}_{M_{(p)}}(p^t)\nonumber\\
&= \text{Ann}_{M_{(p)}}(a);\nonumber
\end{align}

and for any $a\neq p^t, \varphi_a$ is an automorphism of $M_{(p)}$.
Thus each $M_{(p)}$ is a weakly-morphic direct summand and, so $M$ is weakly-morphic by Proposition \ref{p}.
\end{itemize}
\end{proof}

\noindent A new characterisation of morphic  finitely generated Abelian groups  in terms of weakly-morphic $\Bbb Z$-modules is the following:
\begin{theorem}~\label{dsa}
A finitely generated Abelian  group is morphic  if and only if it is weakly-morphic as a $\Bbb Z$-module and each $(p)$-primary  component has the form  $(\Bbb Z_{p^k})^n$ for  some  non-negative integers $n$ and  $k$.
\end{theorem}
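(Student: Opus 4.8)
The plan is to derive the theorem directly from the two characterisations already in hand: Theorem~\ref{rats}, which says a finitely generated Abelian group is morphic precisely when it is finite and each $(p)$-primary component has the form $(\Bbb Z_{p^k})^n$, and Theorem~\ref{das}, which says a finitely generated $\Bbb Z$-module is weakly-morphic precisely when it is finite. The entire content of the statement is that, for finitely generated groups, the finiteness clause in Theorem~\ref{rats} may be replaced by the weakly-morphic clause, and nothing more is needed beyond invoking these two results.

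For the forward implication I would assume $M$ is morphic. Theorem~\ref{rats} then gives that $M$ is finite and that each of its $(p)$-primary components is of the form $(\Bbb Z_{p^k})^n$. Being finite and finitely generated, $M$ is weakly-morphic by Theorem~\ref{das}, so $M$ satisfies both asserted conditions.

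For the converse I would assume $M$ is weakly-morphic as a $\Bbb Z$-module and that each $(p)$-primary component has the form $(\Bbb Z_{p^k})^n$. The point to keep in mind is that the primary-component hypothesis alone does \emph{not} force finiteness: it only constrains the torsion part and is silent about a possible free summand $\Bbb Z^m$. It is precisely the weakly-morphic hypothesis that supplies finiteness, for by Theorem~\ref{das} a weakly-morphic finitely generated $\Bbb Z$-module is finite, so $m=0$ and $M$ is finite. Now $M$ is finite and has all $(p)$-primary components of the form $(\Bbb Z_{p^k})^n$, whence $M$ is morphic by Theorem~\ref{rats}.

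Since both directions are a logical substitution once Theorems~\ref{rats} and~\ref{das} are applied, there is no substantial obstacle; the only thing worth flagging is the observation just made, namely that finiteness genuinely comes from the weakly-morphic assumption rather than from the primary-component condition, so the two hypotheses in the converse are not redundant with each other.
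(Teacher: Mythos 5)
Your proof is correct and follows exactly the paper's route: the paper likewise deduces the theorem as an immediate consequence of Theorem~\ref{rats} and Theorem~\ref{das}, with finiteness transferred between the two via the weakly-morphic condition. Your additional remark that finiteness in the converse comes from the weakly-morphic hypothesis (not the primary-component condition) is a sound observation, though the paper leaves it implicit.
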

\begin{proof}
This is immediate from Theorem~\ref{rats} and Theorem~\ref{das}.
\end{proof}
\begin{corollary}
If $D$ is a principal ideal domain, then every finitely generated torsion $D$-module  is weakly-morphic.
\end{corollary}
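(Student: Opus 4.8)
The plan is to reduce to the primary components via the structure theorem and then imitate the computation carried out in the ($\Leftarrow$) direction of Theorem~\ref{das}. By the fundamental theorem for finitely generated modules over a principal ideal domain, a finitely generated torsion $D$-module decomposes as $M \cong \bigoplus_{(p)} M_{(p)}$, a finite direct sum over the primes $p$ for which the $p$-primary component is non-zero, where each component has the form $M_{(p)} \cong \bigoplus_{j=1}^{k} D/p^{n_j}D$ with $n_1 \geq \cdots \geq n_k \geq 1$. First I would observe that $\text{Hom}_D(M_{(p)}, M_{(q)}) = 0$ whenever $p \neq q$: any homomorphic image of a $p$-primary module is again $p$-primary, whereas the only $p$-primary element of the $q$-primary module $M_{(q)}$ is $0$, since $\gcd(p^s, q^r) = 1$ for all $s, r$. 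Consequently Proposition~\ref{g} applies, and it suffices to prove that each primary component $M_{(p)}$ is weakly-morphic.

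Fix a prime $p$ and an arbitrary $a \in D$. The next step is to reduce to the case where $a$ is a power of $p$. Writing $a = p^t b$ with $t \geq 0$ and $p \nmid b$, B\'ezout's identity in the PID $D$ shows that $b$ is coprime to $p^{n_j}$ and hence a unit in each $D/p^{n_j}D$; therefore multiplication by $b$ is an automorphism of $M_{(p)}$. It follows that $M_{(p)}a = M_{(p)}p^t$ and $\text{Ann}_{M_{(p)}}(a) = \text{Ann}_{M_{(p)}}(p^t)$, so that the $a$-morphic condition for $M_{(p)}$ is equivalent to its $p^t$-morphic condition.

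It then remains to verify the prime-power case by the direct computation already used in Theorem~\ref{das}. For $a = p^t$ one has
\[ M_{(p)}/M_{(p)}p^t \cong \bigoplus_{j=1}^{k} \frac{D/p^{n_j}D}{p^tD/p^{n_j}D} \cong \bigoplus_{j=1}^{k} D/p^{\min(t,n_j)}D, \]
while on the annihilator side, using that $mp^t \in p^{n_j}D$ forces $v_p(m) \geq n_j - t$, one computes $\text{Ann}_{D/p^{n_j}D}(p^t) = p^{\max(n_j-t,0)}D/p^{n_j}D \cong D/p^{\min(t,n_j)}D$, whence by Remark~\ref{ther}
\[ \text{Ann}_{M_{(p)}}(p^t) = \bigoplus_{j=1}^{k} \text{Ann}_{D/p^{n_j}D}(p^t) \cong \bigoplus_{j=1}^{k} D/p^{\min(t,n_j)}D. \]
Comparing the two displays gives $M_{(p)}/M_{(p)}p^t \cong \text{Ann}_{M_{(p)}}(p^t)$, so $M_{(p)}$ is $p^t$-morphic and hence, by the reduction above, $a$-morphic for every $a \in D$. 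Thus each $M_{(p)}$ is weakly-morphic, and the claim follows from Proposition~\ref{g}. The only genuine point of care is the reduction step: unlike the integer case of Theorem~\ref{das}, one must invoke the coprimality (B\'ezout) argument in the PID to see that the factor of $a$ prime to $p$ acts invertibly on $M_{(p)}$, after which the computation is essentially identical to the finite-abelian-group case already treated.
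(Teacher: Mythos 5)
Your proof is correct, and it takes a genuinely different---and in fact more careful and more general---route than the paper. The paper's own proof is two lines: it asserts that a finitely generated torsion module over a PID is a direct sum of cyclic $(p)$-primary modules \emph{and is finite as an Abelian group}, and then cites Theorem~\ref{das}. That finiteness assertion holds for $D=\Bbb Z$ but fails for a general PID (for $D=\Bbb Q[x]$ the cyclic torsion module $\Bbb Q[x]/(x)$ is infinite as an Abelian group), and even when $M$ happens to be finite one must still explain why weak-morphicity as a $\Bbb Z$-module (which is what Theorem~\ref{das} delivers) gives weak-morphicity as a $D$-module, since the endomorphisms $\varphi_a$ for $a\in D$ are not multiplications by integers. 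Your argument avoids both problems by working entirely over $D$: primary decomposition, the B\'ezout reduction showing that the prime-to-$p$ factor $b$ of $a=p^tb$ acts as an automorphism of $M_{(p)}$, and the explicit matching of $M_{(p)}/M_{(p)}p^t$ with $\text{Ann}_{M_{(p)}}(p^t)$, both isomorphic to $\bigoplus_{j}D/p^{\min(t,n_j)}D$. This is exactly what is needed to make the corollary rigorous over an arbitrary PID, so in effect you have repaired the paper's proof rather than reproduced it; note also that your reduction step is precisely the point glossed over in the paper's proof of Theorem~\ref{das} itself, where it is claimed that $\varphi_a$ is an automorphism of $M_{(p)}$ for every $a\neq p^t$ (false for, say, $a=6$ acting on $\Bbb Z_9$)---one must first split off the prime-to-$p$ part, as you do. Two trifles: handle $a=0$ separately, since it admits no factorization $p^tb$ with $p\nmid b$ (trivial, because $M/M0=M=\text{Ann}_M(0)$); and Proposition~\ref{g}, with its $\text{Hom}$-vanishing hypothesis, is more than you need, since the only direction you use---a direct sum of weakly-morphic modules is weakly-morphic---is Proposition~\ref{p} and requires no such hypothesis.
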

\begin{proof}
Over a principal ideal domain, a finitely generated torsion $D$-module is a direct sum of cyclic $(p)$-primary modules and also is  finite as an Abelian group.  The proof is complete after applying Theorem~\ref{das}.
\end{proof}

\begin{ack}
We thank Dr. Alex Samuel Bamunoba  for his helpful suggestions and correspondence regarding this material.
 We are also grateful to the anonymous referee for carefully reading the paper and providing valuable corrections and suggestions
that have improved it.  This work was carried out at Makerere University with support from the Makerere-Sida Bilateral Program Phase IV, Project 316 ``Capacity building in mathematics and its application".
\end{ack}

\end{document}